\renewcommand{\phi}{\varphi}
\newcommand{\bbR}{{\mathbb{R}}}
\newcommand{\calX}{{\mathcal{X}}}
\newcommand{\Lie}{{\mathcal{L}}}
\DeclareMathOperator{\id}{id}
\DeclareMathOperator{\im}{im}
\DeclareMathOperator{\pr}{pr}
\DeclareMathOperator{\Span}{Span}
\DeclareMathOperator{\Diff}{Diff}
\DeclareMathOperator{\Graph}{Graph}
\DeclareMathOperator{\Ann}{Ann}
\newcommand*{\longhookrightarrow}{\ensuremath{\lhook\joinrel\relbar\joinrel\rightarrow}}
\newcommand*{\InnDer}{\ensuremath{\relbar\joinrel\stackrel{\mapstochar}{}}}
\newcommand{\Vol}{{\mathrm{vol}}}
\theoremstyle{plain}
\newtheorem{Theorem}{Theorem}[section]
\newtheorem{Proposition}[Theorem]{Proposition}
\newtheorem{Corollary}[Theorem]{Corollary}
\newtheorem{Lemma}[Theorem]{Lemma}
\theoremstyle{definition}
\newtheorem{Definition}[Theorem]{Definition}
\newtheorem{Remark}[Theorem]{Remark}
\newtheorem{Example}[Theorem]{Example}
\newtheorem{Assumption}[Theorem]{Assumption}
\theoremstyle{remark}
\begin{document}

\title[Removable presymplectic singularities]{Removable presymplectic singularities\\ and the local splitting of Dirac structures}

\author[C.~Blohmann]{Christian Blohmann}

\address{Max-Planck-Institut f\"ur Mathematik, Vivatsgasse 7, 53111 Bonn, Germany}
\email{blohmann@mpim-bonn.mpg.de}

\subjclass[2010]{53D18; 53D05, 53D17}


\begin{abstract}
We call a singularity of a presymplectic form $\omega$ \emph{removable in its graph} if its graph extends to a smooth Dirac structure over the singularity. An example for this is the symplectic form of a magnetic monopole. A criterion for the removability of singularities is given in terms of regularizing functions for pure spinors. All removable singularities are poles in the sense that the norm of $\omega$ is not locally bounded. The points at which removable singularities occur are the non-regular points of the Dirac structure for which we prove a general splitting theorem: Locally, every Dirac structure is the gauge transform of the product of a tangent bundle and the graph of a Poisson structure. This implies that in a neighborhood of a removable singularity $\omega$ can be split into a non-singular presymplectic form and a singular presymplectic form which is the partial inverse of a Poisson bivector that vanishes at the singularity. An interesting class of examples is given by log-Dirac structures which generalize log-symplectic structures. The analogous notion of removable singularities of Poisson structures is also studied.
\end{abstract}

\maketitle


\section{Introduction}

Dirac structures were introduced by Courant and Weinstein \cite{CourantWeinstein:1988,Courant:Transactions1990} as generalization and unified description of presymplectic and Poisson structures. The idea is to study presymplectic and Poisson structures on a manifold $M$ in terms of their graphs, i.e., the graphs of the maps $\omega: TM \to T^* M$ and $\Pi: T^*M \to TM$, which are both subbundles of $TM \oplus T^*M$. (The definition of Dirac structures is recalled in Sec.~\ref{sec:DiracRecall}.) Dirac structures have since become a very active field of research and found a wide range of interesting applications. (For recent work see e.g.\ \cite{AlekseevBursztynMeinrenken:2009,Li-BlandMeinrenken:2014,Gualtieri:2014,BrahicFernandes:2014,BursztynCavalcantiGualtieri:2015,AlekseevMeinrenken:2012} and references therein.) In this paper we shall study the following remarkable phenomenon:

Consider the magnetic symplectic form of a magnetic monopole in 2 dimensions, given in Darboux coordinates of $M = T^* \bbR^2$ as
\begin{equation*}
\label{eq:2dMonopole}
\begin{aligned}
 \omega 
 &:= \omega_0 + B \\
 &= dq^i \wedge dp_i + r^{-2} dq^1 \wedge dq^2 \,,
\end{aligned}
\end{equation*}
where $r^2 = (q^1)^2 + (q^2)^2$. This 2-form has a singularity at $M_\mathrm{sing} := T^*_0 \bbR^2$ in the sense that it is only defined on $M \setminus M_\mathrm{sing}$ and cannot be extended to any of the points of $M_\mathrm{sing}$.

The intriguing observation is now that even though $\omega$ cannot be extended to the singularities, its graph extends to a smooth Dirac structure over $M_\mathrm{sing}$ (Section~\ref{sec:GuidingExample}). In this sense the singularity of $\omega$ at $M_\mathrm{sing}$ is removed by the Dirac structure. (For another example see p.~634 of Courant's original paper \cite{Courant:Transactions1990}). If we repeat this with the magnetic symplectic form of a magnetic monopole on $\bbR^3$, however, we find that its singularity is not removable (Example~\ref{ex:DiracMonopole}). This shows that the question whether a singularity is removable or not can be quite subtle. The purpose of this paper is to study the phenomenon of removable singularities in a systematic way.

\subsection{Main results}

The main results of the paper are the following: In Proposition~\ref{prop:SpinorRemove} we observe that a singularity of $\omega$ at $m \in M_\mathrm{sing}$ is removable if and only if there is a regularizing function $f$ on a neighborhood of $m$ such that $f e^{-\omega}$ extends to a smooth differential form at $m$. This implies that $\omega$ cannot be bounded on a neighborhood of $m$ (Cor.~\ref{cor:FiniteNotRemovable}). In this sense all removable singularities are poles. Singularities at which $\omega$ is bounded but not-differentiable are, therefore, not removable. There is an analogous criterion for Poisson structures (Prop.~\ref{prop:PoissonSpinorRemove}).

When all singularities of $\omega$ at $M_\mathrm{sing}$ are removed by a Dirac structure $L$ over $M$, then $M_\mathrm{sing}$ is the set of singular points of $L$, i.e.\ the points at which the dimension of the characteristic distribution $\rho(L)$ is not locally constant. This shows that if we want to understand and classify removable singularities we have to understand the form of Dirac structures at singular points. In Theorem~\ref{thm:SplittingTheorem} we prove a splitting theorem which essentially states that every point of $M$ has a neighborhood isomorphic to $S \times N$ such that the Dirac structure is locally isomorphic to
\begin{equation*}
 L \cong e^{\omega}\bigr(TS \times \Graph(\Pi) \bigr) \,,
\end{equation*}
where $e^{\omega}$ is the gauge transformation or $B$-field transform by a closed 2-form $\omega$ (in the sense of \cite{SeveraWeinstein:2001, Hitchin:2003, Gualtieri:Annals2011}) and $\Pi$ is a Poisson bivector on $N$.

This result enables us to prove a splitting theorem for presymplectic forms with singularities (Theorem~\ref{thm:OmegaSplitting}): A 2-form $\omega$ has a removable singularity at $m \in M_\mathrm{sing}$ if and only if it can be split locally as
\begin{equation*}
 \omega = \omega_{\mathrm{reg}} + \omega_{\mathrm{sing}}
\end{equation*}
into a closed 2-form $\omega_\mathrm{reg}$ that does not have a singularity at $m$ and a closed 2-form $\omega_\mathrm{sing}$, such that (i) $\ker \omega_{\mathrm{sing}}$ is a regular distribution that extends to $m$, and (ii) there is a Poisson bivector $\Pi$ on a neighborhood of $m$ that is a \emph{partial inverse} of $\omega_\mathrm{sing}$, i.e.\ that satisfies
\begin{equation*}
 \omega_{\mathrm{sing}} \Pi \omega_{\mathrm{sing}} = \omega_{\mathrm{sing}} \,,\quad
 \Pi \omega_{\mathrm{sing}} \Pi = \Pi \,.
\end{equation*}
Moreover, the splitting can be chosen such that $\Pi$ vanishes at $m$. In the example of the magnetic monopole in 2 dimensions, $\omega_{\mathrm{reg}} = \omega_0$ and $\omega_{\mathrm{sing}} = B$. This theorem explains why the singularity of a magnetic monopole on $\bbR^3$ is not removable: The kernel of the magnetic field consists, in spherical coordinates, of the radial directions and does not extend to a regular distribution at the origin (Example~\ref{ex:DiracMonopole}).

\subsection{Notation}
The annihilator of a vector subspace $W \subset V$ will be denoted by $W^\circ = \{\alpha \in V^* ~|~ \alpha(W) = 0\}$. For a Dirac structure $L \subset TM \oplus T^*M$ the restricted projections to the tangent and cotangent bundles will be denoted by $\rho := \pr_{TM}|_L$ and $\rho^* := \pr_{T^*M}|_L$. Vectors and vector fields will be denoted by capital Roman letters $X,Y \in TM$ and 1-forms by small Greek letters $\alpha, \beta \in T^*M$, so that typical elements of $TM \oplus T^*M$ will be denoted by $X + \alpha$ and $Y + \beta$. Summation convention: Repeated indices (regardless if upper or lower) will always be summed over.

\subsection{Acknowledgements}
I would like to thank first of all Alan Weinstein for teaching me about Dirac structures and the math department of UC Berkeley, where this project was started, for its hospitality. I would also like to acknowledge Max K\"orfer for his stimulating questions. Earlier versions of Prop.~\ref{prop:SpinorRemove} and Cor.~\ref{cor:Splitting2} are stated in his Bachelor's thesis \cite{Koerfer:Bachelor2014}. I am grateful to Alan Weinstein, Pedro Frejlich, and Madeleine Jotz for their very helpful comments. Finally, I would like to thank the referees whose comments have improved this paper substantially. Def.~\ref{def:logDirac} of log-Dirac structures was suggested by the first referee. Note added in proof: Recently, Bursztyn, Lima, and Meinrenken have proved a powerful splitting theorem \cite{BursztynLimaMeinrenken:2016} which generalizes Theorem~\ref{thm:SplittingTheorem}.

\section{Removable singularities of presymplectic forms}
\label{sec:presymplectic}

\subsection{Singularities of presymplectic structures}

We start by defining the notion of singularity that shall be studied in this paper:

\begin{Definition}
\label{def:Singularity}
Let $M$ be a manifold and $M_\mathrm{sing} \subset M$ a closed subset. A 2-form $\omega \in \Omega^2(M \setminus M_\mathrm{sing})$ is said to have a \emph{singularity} at $m \in M_\mathrm{sing}$ if does not have a smooth extension to a neighborhood of $m$. 
\end{Definition}

Having a smooth extension to a neighborhood $U$ of $m$ means that there is a 2-form $\bar{\omega} \in \Omega^2\bigl( (M \setminus M_\mathrm{sing}) \cup U \bigr)$ such that its restriction to $M \setminus M_\mathrm{sing}$ is $\omega$. An inner point of $M_\mathrm{sing}$ cannot be singular, so, if all points in $M_\mathrm{sing}$ are singular, $M \setminus M_\mathrm{sing}$ is dense in $M$. This is the situation we shall consider:

\begin{Assumption}
\label{assmpt:SingularDense}
For the rest of this paper we assume that $M$ is a smooth manifold of dimension $\geq 2$ and that $M_\mathrm{sing} \subset M$ is a closed subset with dense complement. 
\end{Assumption}

Having a singularity at $m$ means that in local coordinates some of the entries of $\omega_{ij}$ or their derivatives do not converge at $m$. A first rough classification of singularities is given by the following:

\begin{Definition}
Let $\wedge^2 TM$ be equipped with the structure of a normed vector bundle. A singularity of $\omega \in \Omega^2(M \setminus M_\mathrm{sing})$ at $m \in M_\mathrm{sing}$ is said to be \emph{bounded} if there is a neighborhood $U$ of $m$ such that $\| \omega \|$ is bounded on $U \setminus M_\mathrm{sing}$.
\end{Definition}

Note that this definition does not depend on the choice of the norm.

\subsection{Dirac structures}
\label{sec:DiracRecall}

We recall that a \emph{Lie algebroid} is a smooth vector bundle $A \to M$ over a manifold $M$ together with a bundle map $\rho: A \to TM$, called the anchor, and a Lie bracket on the vector space of smooth sections of $A$, such that 
\begin{equation*}
 [a,fb] = \bigl( \rho(a)\cdot f \bigr) b + f[a,b]
\end{equation*}
for all $a, b \in \Gamma^\infty(M,A)$, $f \in C^\infty(M)$.

We recall that a \emph{Dirac structure} on a manifold $M$ consists of a subbundle $L \subset TM \oplus T^*M$ that is maximally isotropic with respect to the symmetric pairing
\begin{equation}
\label{eq:SymPairing}
  \langle X + \alpha, Y + \beta \rangle_+ 
  := \tfrac{1}{2} (i_X \beta + i_Y \alpha)
\end{equation}
and closed under the Dorfman bracket
\begin{equation*}
  [X + \alpha, Y + \beta] = [X,Y] + \Lie_X \beta - i_Y d \alpha \,.
\end{equation*}
It follows that the vector bundle $L \to M$ has the structure of a Lie algebroid given by the Dorfman bracket and the anchor $\rho_L(X + \alpha) = X$. Moreover, the pull-back to $L$ of the antisymmetric pairing defines the \emph{associated Lie algebroid 2-form} $\omega_L \in \Gamma(\wedge^2 L^*)$ given by
\begin{equation*}
 \omega_L(X + \alpha, Y + \beta)
 := - \tfrac{1}{2}( i_X \beta - i_Y \alpha ) \,,
\end{equation*}
which is closed in the Lie algebroid cohomology.

Two obvious candidates for Dirac structures are given by the graphs of a 2-form $\omega \in \Omega^2(M)$ and a bivector field $\Pi \in \calX^2(M) = \Gamma(M, \wedge^2 TM)$,
\begin{equation*}
\begin{aligned}
 \Graph(\omega) &:= \{X + i_X \omega ~|~ X \in TM \} \,, \\
 \Graph(\Pi) &:= \{i_\alpha \Pi + \alpha ~|~ \alpha \in T^*M\} \,.
\end{aligned}
\end{equation*}
One of the advantages of studying 2-form and bivector fields in terms of their graphs is that it gives a unified description of the integrability conditions:

\begin{Proposition}
$\Graph(\omega)$ is a Dirac structure if and only if $\omega$ is closed. $\Graph(\Pi)$ is a Dirac structure if and only if $\Pi$ is Poisson. Moreover, if we identify $\Graph(\omega) = TM$ then the associated Lie algebroid 2-form is $\omega$. If we identify $\Graph(\Pi) = T^*M$ then the associated Lie algebroid 2-form is $\Pi$.
\end{Proposition}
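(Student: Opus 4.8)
The plan is to separate the pointwise (algebraic) content from the integrability condition and to treat the two graphs in parallel. First I would note that $\Graph(\omega)$ and $\Graph(\Pi)$ are the images of the bundle maps $X \mapsto X + i_X\omega$ and $\alpha \mapsto i_\alpha\Pi + \alpha$, hence rank-$n$ subbundles of the rank-$2n$ bundle $TM \oplus T^*M$. Isotropy is then a one-line consequence of antisymmetry: from \eqref{eq:SymPairing} one gets $\langle X + i_X\omega, Y + i_Y\omega\rangle_+ = \tfrac12\bigl(\omega(Y,X) + \omega(X,Y)\bigr) = 0$, and likewise $\langle i_\alpha\Pi + \alpha, i_\beta\Pi + \beta\rangle_+ = 0$ using $\Pi(\alpha,\beta) = -\Pi(\beta,\alpha)$. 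Since the pairing has split signature $(n,n)$, any rank-$n$ isotropic subbundle already satisfies $L = L^\perp$, so maximal isotropy is automatic and the only remaining condition for a Dirac structure is closure under the Dorfman bracket. It is convenient to exploit that, on a maximally isotropic $L$, closure under the bracket is equivalent to the vanishing of the \emph{tensorial} expression $\langle [a,b], c\rangle_+$ for sections $a,b,c$ of $L$; this lets me test integrability on a convenient spanning set rather than on arbitrary sections.

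For $\Graph(\omega)$ I would compute the cotangent part of the Dorfman bracket $[X + i_X\omega, Y + i_Y\omega]$. Using the commutator identity $\Lie_X i_Y - i_Y \Lie_X = i_{[X,Y]}$ together with Cartan's formula $\Lie_X = d\,i_X + i_X d$, the cotangent part collapses to $i_{[X,Y]}\omega + i_Y i_X d\omega$. The tangent part is $[X,Y]$, so the bracket lies in $\Graph(\omega)$ precisely when $i_Y i_X d\omega = 0$ for all $X,Y$, which holds if and only if $d\omega = 0$. This settles the equivalence for $\omega$.

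For $\Graph(\Pi)$, writing $\Pi^\sharp\alpha := i_\alpha\Pi$, the cotangent part of $[\,i_\alpha\Pi + \alpha,\, i_\beta\Pi + \beta\,]$ is $\Lie_{\Pi^\sharp\alpha}\beta - i_{\Pi^\sharp\beta}d\alpha$. A short manipulation with Cartan's formula identifies this with the Koszul bracket $[\alpha,\beta]_\Pi := \Lie_{\Pi^\sharp\alpha}\beta - \Lie_{\Pi^\sharp\beta}\alpha - d\,\Pi(\alpha,\beta)$, so that integrability reads $[\Pi^\sharp\alpha, \Pi^\sharp\beta] = \Pi^\sharp[\alpha,\beta]_\Pi$, i.e.\ $\Pi^\sharp$ must intertwine the Koszul bracket with the Lie bracket of vector fields. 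The cleanest route is to restrict to exact forms $\alpha = df$, $\beta = dg$, which span $T^*M$ pointwise: there $\Pi^\sharp df$ is the Hamiltonian field $X_f$, $[df,dg]_\Pi = d\{f,g\}$ with $\{f,g\} = \Pi(df,dg)$, and the condition becomes $[X_f, X_g] = X_{\{f,g\}}$, equivalent to the Jacobi identity for $\{\cdot,\cdot\}$ and hence to $[\Pi,\Pi] = 0$. I expect this step --- recognizing the obstruction $[\Pi^\sharp\alpha, \Pi^\sharp\beta] - \Pi^\sharp[\alpha,\beta]_\Pi$ as a multiple of the Schouten bracket $[\Pi,\Pi]$ --- to be the main obstacle, the remainder being bookkeeping.

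Finally, the two ``moreover'' statements are immediate substitutions into the definition of $\omega_L$. Under $\Graph(\omega) \cong TM$, $X \mapsto X + i_X\omega$, one finds $\omega_L = -\tfrac12\bigl(i_X i_Y\omega - i_Y i_X\omega\bigr) = \omega(X,Y)$; under $\Graph(\Pi) \cong T^*M$, $\alpha \mapsto i_\alpha\Pi + \alpha$, the same formula evaluates to $\Pi(\alpha,\beta)$, with the signs fixed by the conventions for $i_\alpha\Pi$ in the Notation subsection.
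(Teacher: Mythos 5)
Your proof is correct. Note that the paper states this proposition without proof, as part of its recollection of standard background on Dirac structures (the facts go back to Courant), so there is no argument in the paper to compare against; your route --- isotropy from antisymmetry, maximality from rank counting in split signature, reduction of Dorfman-closure to the tensorial Courant expression $\langle [a,b],c\rangle_+$ so that integrability can be tested on coordinate vector fields resp.\ exact $1$-forms, yielding $d\omega=0$ resp.\ $[\Pi,\Pi]=0$ via $[X_f,X_g]=X_{\{f,g\}}$ --- is the standard one and is carried out correctly, including the direct substitutions for the two ``moreover'' clauses.
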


\subsection{The guiding example}
\label{sec:GuidingExample}

As announced in the introduction the following will be our guiding example of a removable singularity:

\begin{Proposition}
The Dirac structure $\Graph(\omega)$ of the magnetic symplectic form $\omega \in \Omega^2( T^*\bbR^2 \setminus T^*_0 \bbR^2 )$ of a magnetic monopole in 2 dimensions defined in Eq.~\eqref{eq:2dMonopole} extends smoothly to a Dirac structure on $T^* \bbR^2$.
\end{Proposition}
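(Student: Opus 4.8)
The plan is to exhibit an explicit smooth frame for the would-be extension and then argue that the two defining conditions of a Dirac structure extend by continuity. First I would work in the Darboux coordinates $(q^1,q^2,p_1,p_2)$ on $T^*\bbR^2$ and compute the graph directly. For a tangent vector $X = a\,\partial_{q^1} + b\,\partial_{q^2} + c\,\partial_{p_1} + d\,\partial_{p_2}$ one finds
\[
 i_X\omega = (-r^{-2}b - c)\,dq^1 + (r^{-2}a - d)\,dq^2 + a\,dp_1 + b\,dp_2 \,,
\]
so on $M\setminus M_\mathrm{sing}$ the graph $\Graph(\omega)$ is spanned by the four sections $\rho^{-1}(\partial_{q^1}),\dots,\rho^{-1}(\partial_{p_2})$. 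Two of them, namely $\partial_{p_1} - dq^1$ and $\partial_{p_2} - dq^2$, are already smooth across $M_\mathrm{sing}$, whereas the other two contain the singular terms $r^{-2}\,dq^2$ and $r^{-2}\,dq^1$.

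The key step is to clear these poles by rescaling. Since rescaling a frame by a nowhere-vanishing function on $M\setminus M_\mathrm{sing}$ does not change its span, I replace the two singular generators by their products with $r^2$, obtaining
\[
 s_1 = \partial_{p_1} - dq^1 \,,\qquad s_2 = \partial_{p_2} - dq^2 \,,
\]
\[
 s_3 = r^2\,\partial_{q^1} + r^2\,dp_1 + dq^2 \,,\qquad s_4 = r^2\,\partial_{q^2} + r^2\,dp_2 - dq^1 \,,
\]
all of which are smooth sections of $TM\oplus T^*M$ on the whole of $T^*\bbR^2$. On $M\setminus M_\mathrm{sing}$ they span $\Graph(\omega)$, because $r^2\neq 0$ there. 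I would then check that at the singular fiber $r=0$ they specialize to $\partial_{p_1}-dq^1$, $\partial_{p_2}-dq^2$, $dq^2$, $-dq^1$, which are manifestly linearly independent. Hence $s_1,\dots,s_4$ are a local frame near every point of $M_\mathrm{sing}$, and they define a smooth rank-$4$ subbundle $L\subset TM\oplus T^*M$ over all of $T^*\bbR^2$ with $L|_{M\setminus M_\mathrm{sing}} = \Graph(\omega)$.

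It remains to verify that $L$ is a Dirac structure, which I would do by a density argument rather than by any computation at the singular points. Isotropy is immediate: the functions $\langle s_i, s_j\rangle_+$ are smooth on $M$ and vanish on the dense subset $M\setminus M_\mathrm{sing}$, hence vanish everywhere; as $\mathrm{rank}\,L = 4 = \tfrac12\,\mathrm{rank}(TM\oplus T^*M)$, $L$ is maximally isotropic. For integrability I would use that, on a maximally isotropic subbundle, the map $T(e_1,e_2,e_3) := \langle [e_1,e_2], e_3\rangle_+$ is $C^\infty(M)$-linear in each argument; this follows from the Leibniz rules of the Dorfman bracket together with the identities $\langle e_i, e_j\rangle_+ = 0$, so $T$ is a smooth tensor, and $L$ is closed under the Dorfman bracket precisely when $T\equiv 0$. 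Since $\omega$ is closed on $M\setminus M_\mathrm{sing}$ (indeed $d(r^{-2}\,dq^1\wedge dq^2)=0$), the graph is a Dirac structure there by the Proposition recalled above, so $T$ vanishes on the dense set $M\setminus M_\mathrm{sing}$; by continuity $T\equiv 0$ on all of $M$, and $L$ is the desired Dirac structure extending $\Graph(\omega)$.

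The only genuinely creative step is finding the rescaled generators $s_3,s_4$; once the smooth subbundle $L$ is in hand, both Dirac conditions extend automatically because they are encoded by smooth tensors that already vanish on a dense set. I expect the main thing to get right to be the bookkeeping in the interior-product computation and the verification that the rescaling really produces linearly independent sections at $r=0$, rather than any analytic subtlety.
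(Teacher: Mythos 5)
Your proposal is correct and follows essentially the same route as the paper: you produce the natural frame of $\Graph(\omega)$ from the coordinate vector fields, clear the $r^{-2}$ poles by multiplying the two singular generators by $r^2$ (your $s_3,s_4$ agree, up to sign and relabelling, with the paper's $a_i = B^{-1}_{ij}\tilde a_j$), and check linear independence at $r=0$. Your density argument for isotropy and involutivity is exactly the content of the paper's Remark~\ref{rmk:smoothimplied}, which you merely spell out in more detail via the tensoriality of $\langle [e_1,e_2],e_3\rangle_+$ on an isotropic subbundle.
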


Before we give the proof we note that, since $T^*\bbR^2 \setminus T^*_0 \bbR^2$ is dense in  $M := T^* \bbR^2$, the extension of the Dirac structure to the singular locus is unique. In fact, all we have to show is that the topological closure
\begin{equation*}
  L := \overline{\Graph(\omega)}
\end{equation*}
is a smooth vector subbundle of $TM \oplus T^* M$. For this we have to find a basis of local sections of $\Graph(\omega)$ that extends smoothly to a basis over the singular fiber $M_\mathrm{sing} := T_0^*\bbR^2$. The smoothness of the Lie algebroid bracket, of the anchor, and of the Lie algebroid 2-form is automatic since each of these structures is given by the restriction of a smooth map on an ambient manifold (see Remark~\ref{rmk:smoothimplied}).

\begin{proof}
The natural basis of sections of $\Graph(\omega)$ is obtained by inserting the coordinate vector fields of the Darboux coordinates into $\omega$ which yields
\begin{equation*}
  \tilde{a}_i := \frac{\partial}{\partial q^i} + dp_i + B_{ij} dq^j  \,,\quad
  b_i := \frac{\partial}{\partial p_i} - dq^i \,,
\end{equation*}
where $B_{ij} = r^{-2} \varepsilon_{ij}$ are the components of $B = \tfrac{1}{2} B_{ij} dq^i \wedge dq^j$. The sections $\tilde{a}_i$ do not extend to $r=0$ where they have a pole. Instead we can take the following sections:
\begin{equation*}
  a_i 
  := B^{-1}_{ij} \tilde{a}_j 
  =  B^{-1}_{ij} \frac{\partial}{\partial q^j} + dq^i + B^{-1}_{ij} dp_j \,,
\end{equation*}
where
\begin{equation*}
  B^{-1}_{ij} = - r^{2} \varepsilon_{ij} \,.
\end{equation*}
The sections $a_i$ extend smoothly to $r=0$ where they have the value $a_i(0) = dq^i$. We conclude that $\{a_1, a_2, b_1, b_2\}$ is a set of smooth sections of $\Graph(\omega)$ that extend smoothly to $r=0$ where they remain linearly independent.
\end{proof}

Let us compute the rest of the Lie algebroid structure. The anchor is given by
\begin{equation*}
  \rho(a_i) = B^{-1}_{ij} \frac{\partial}{\partial q^j} \,,\quad
  \rho(b_i) =  \frac{\partial}{\partial p_i} \,.
\end{equation*}
The characteristic distribution on $M = T^* \bbR^2$ given by the image of the anchor has two leaves: the manifold $T^*\bbR^2 \setminus T^*_0 \bbR^2$ on which $\omega$ is defined and the singular fiber $T_0^*\bbR^2$. The Lie bracket of the sections of $L$ is given by
\begin{equation*}
 [a_i, a_j] 
 = \frac{\partial B^{-1}_{ij} }{\partial q^k} a_k 
 = - 2\varepsilon_{ij} q^k a_k \,,\quad
 [a_i, b_j] = 0 = [b_i, b_j] \,.
\end{equation*}
The 2-form $\omega_L$ of the Dirac structure $L$ is given by
\begin{equation*}
 \omega_L(a_i, a_j) = B^{-1}_{ij} \,,\quad 
 \omega_L(a_i, b_j) = - B^{-1}_{ij} \,,\quad 
 \omega_L(b_i, b_j) = 0 \,, 
\end{equation*}
which vanishes on the singular fiber. Finally, let us verify that $L$ is not the graph of a closed 2-form or a Poisson bivector. The following criterion is obvious yet useful:

\begin{Proposition}
\label{prop:AnchorSurjective}
Let $L$ be a Dirac structure over $M$ and $m \in M$ a point.
\begin{itemize}
 \item[(i)] $L$ is the graph of a 2-form in a neighborhood of $m$ if and only if the anchor is surjective at $m$.
 \item[(ii)] $L$ is the graph of a bivector field in a neighborhood of $m$ if and only if $L \cap T_m M = 0$.
\end{itemize}
\end{Proposition}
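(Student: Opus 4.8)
The plan is to treat the two statements symmetrically, exploiting the fact that a Dirac structure is maximally isotropic for the split-signature pairing \eqref{eq:SymPairing}, so that $\dim L_p = \dim M =: n$ at every point $p$. Both projections $\rho = \pr_{TM}|_L$ and $\rho^* = \pr_{T^*M}|_L$ are therefore bundle maps between vector bundles of equal rank $n$, and the whole argument reduces to recognizing when one of them is a fiberwise isomorphism and to inverting it smoothly on a neighborhood.

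For (i), I would first note that $L = \Graph(\omega)$ forces $\rho$ to be a fiberwise isomorphism, since $X + i_X\omega \mapsto X$ is a bijection; in particular $\rho$ is surjective at $m$. For the converse, surjectivity of $\rho_m \colon L_m \to T_mM$ together with the equality $\dim L_m = n = \dim T_mM$ makes $\rho_m$ an isomorphism. Since invertibility of a bundle map is an open condition, $\rho$ is an isomorphism on some neighborhood $U$ of $m$, so $\rho^{-1}$ is smooth there. I would then define the smooth bundle map $\sigma := \rho^* \circ \rho^{-1} \colon TM|_U \to T^*M|_U$ and set $\omega(X,Y) := \langle \sigma X, Y\rangle$. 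The key point is that $\omega$ is antisymmetric: for $X + \alpha, Y + \beta \in L$ isotropy gives $i_X\beta + i_Y\alpha = 0$, which is exactly $\omega(X,Y) + \omega(Y,X) = 0$. Hence $\omega$ is a smooth $2$-form with $L|_U = \Graph(\omega)$, and it is automatically closed by the earlier Proposition characterizing graphs of closed forms among Dirac structures.

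For (ii), the argument is identical with the roles of $TM$ and $T^*M$ interchanged. Here the relevant observation is $\ker \rho^*_m = \{X + \alpha \in L_m \mid \alpha = 0\} = L_m \cap T_mM$, so the condition $L \cap T_mM = 0$ is precisely injectivity of $\rho^*_m$, equivalently (again by the equal-dimension count) that $\rho^*_m$ is an isomorphism. Inverting $\rho^*$ on a neighborhood and forming $\tau := \rho \circ (\rho^*)^{-1} \colon T^*M \to TM$ yields a smooth bivector $\Pi(\alpha,\beta) := \langle \beta, \tau\alpha\rangle$, antisymmetric by the same isotropy identity, with $L = \Graph(\Pi)$; it is Poisson by the same appeal to the integrability Proposition. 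The converse is immediate, since $\Graph(\Pi)_m \cap T_mM = \{i_\alpha\Pi \mid \alpha = 0\} = 0$.

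There is no serious obstacle here; the statement is essentially fiberwise linear algebra, propagated to a neighborhood by openness of the invertibility locus. The only points that require care are the dimension count coming from maximal isotropy, which upgrades ``surjective''/``injective'' to ``isomorphism,'' and the verification that isotropy is exactly the antisymmetry needed to produce a genuine $2$-form or bivector rather than an arbitrary bundle map.
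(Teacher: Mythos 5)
Your proof is correct and follows essentially the same route as the paper: use maximal isotropy to get $\dim L = \dim M$, upgrade surjectivity (resp.\ triviality of $L\cap T_mM$, i.e.\ injectivity of $\rho^*$) to a fiberwise isomorphism on a neighborhood, and compose the inverse with the other projection, with antisymmetry coming from isotropy. The only cosmetic difference is that the paper phrases (ii) via the identity $\pr_{T^*M}(L)=(L\cap TM)^\circ$ rather than via $\ker\rho^*_m = L_m\cap T_mM$; these are equivalent.
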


\begin{proof}
(i) If $\rho$ is surjective at $m$ then it is surjective in a neighborhood $U$ of $m$. For reasons of dimension it is a local isomorphism of vector bundles. The map
\begin{equation*}
  TU \stackrel{\rho^{-1}}{\longrightarrow} L \stackrel{\mathrm{pr}}{\longrightarrow} T^*U
\end{equation*}
can be identified with a bilinear form on $TU$ which is antisymmetric because $L$ is isotropic.

(ii) Since $\pr_{T^*M}(L) = (L \cap TM)^\circ$, the projection $\pr_{T^*M}$ is surjective at $m$ if and only if $L \cap T_m M = 0$. The rest of the proof is dual to (i).
\end{proof}

It is easy to see that for the Dirac structure of the example neither the anchor nor the projection to $T^* \bbR^2$ is surjective at $r=0$. Therefore, it is neither the graph of a 2-form nor the graph of a bivector field. 

\begin{Remark}
The example immediately generalizes to magnetic symplectic forms $\omega = \omega_0 + B$ on $M = T^*Q \setminus T^*_q Q$, where $B$ is the pull-back to $M$ of a symplectic form on $Q \setminus \{q\}$ that is the inverse of a Poisson bivector with a zero at $q$. We will come back to this case in Sec.~\ref{sec:logsymplectic}.
\end{Remark}

\subsection{Removable singularities}

\begin{Definition}
A singularity of a presymplectic form $\omega \in \Omega^2(M \setminus M_\mathrm{sing})$ at $m \in M_\mathrm{sing}$ is called \emph{removable in its graph} if the closure of $\Graph(\omega)$ in $TM \oplus T^*M$ is a Dirac structure over a neighborhood of $m$.
\end{Definition}

As no other notion of removable singularity shall be considered in this paper we will write ``removable'' for ``removable in its graph''.

Since by the general Assumption~\ref{assmpt:SingularDense} the set $M \setminus M_\mathrm{sing}$ is dense in $M$, the Dirac structure that removes a singularity is unique. Moreover, if a singularity at $m$ is removable then all singularities in a neighborhood of $m$ are removable. The analogous notion of removable singularities of Poisson bivector fields will be studied in Sec.~\ref{sec:Poisson}

\begin{Remark}
\label{rmk:smoothimplied}
Because $M \setminus M_\mathrm{sing}$ is dense, it need only be checked whether the closure of the graph is a smooth vector subbundle. All other properties follow: Since the symmetric pairing on $TM \oplus T^*M$ is a smooth map, the closure of the graph is isotropic. Since the projection $TM \oplus T^*M \to TM$ is a smooth map, the anchor extends smoothly to the closure. Since the Lie bracket on sections of the graph is the restriction of the Courant bracket, which is smoothly defined for all sections of $TM \oplus T^*M$, it extends smoothly to the closure. Since all structure maps are smooth the Jacobi identity and the Leibniz rule of the Lie algebroid bracket hold on the closure, as well. Moreover, since the antisymmetric pairing on $TM \oplus T^*M$ is smooth, the 2-form on the graph extends to a 2-form on its closure which is closed in the Lie algebroid cohomology.
\end{Remark}

Assume that all singularities in $M_\mathrm{sing}$ can be removed by the Dirac structure $L \to M$. It follows from Prop.~\ref{prop:AnchorSurjective} that the anchor of $L$ is not surjective at $M_\mathrm{sing}$, since otherwise it would be the graph of a presymplectic form which contradicts the definition of singularity. Therefore, $M_\mathrm{sing}$ consists of the union of all leaves of non-zero codimension of the characteristic distribution $\rho(L)$. Since, furthermore, $M_\mathrm{sing}$ has no inner points, it is the set of singular points of the Lie algebroid, i.e.\ the set of points at which the rank of the anchor is not locally constant. This suggests that the study of removable singularities is closely related to the study of Dirac structures at singular points. We come back to this observation in Sec.~\ref{sec:SplittingTheorem}

\subsection{Pure spinors and regularizing functions}

In order to find useful criteria for a singularity to be removable we recall the relation between pure spinors and Dirac structures (see \cite{Gualtieri:Annals2011}).

Consider the sum $T \oplus T^*$ of a vector space $T$ and its dual, which is equipped with the non-degenerate symmetric bilinear form \eqref{eq:SymPairing}. The Clifford algebra $\mathrm{Cl}(T \oplus T^*)$ is defined as the free algebra generated by $T \oplus T^*$ divided by the relations $v^2 = \langle v, v \rangle_+$ for all $v \in T \oplus T^*$.

The Clifford algebra has a faithful representation on the space $\wedge^\bullet T^*$ of forms on $T$ defined for the generators $v = (X+\alpha) \in T \oplus T^*$ by
\begin{equation*}
 (X + \alpha) \cdot \phi := i_X \phi + \alpha \wedge \phi \,.
\end{equation*}
The annihilator $\Ann \phi := \{v\in T\oplus T^* ~|~ v \cdot \phi = 0 \}$ of a given form is an isotropic subspace. Conversely, every isotropic subspace is the annihilator of a form which is unique up to a nonzero scalar. A form $\phi$ is called a \emph{pure spinor} \cite{Chevalley:Spinors1954} if $\Ann(\phi)$ is maximally isotropic, that is, of dimension $n$.

A smooth family of maximally isotropic subspaces of $TM \oplus T^*M$ is then given by a smooth family of pure spinors $\phi \in \Omega(M)$. The integrability condition of a Dirac structure can be deduced from the relation
\begin{equation*}
 [X + \alpha, Y + \beta] \cdot \phi = (X + \alpha)(Y + \beta) \cdot d\phi \,,
\end{equation*}
which holds for all $X + \alpha, Y + \beta \in \Ann(\phi)$. The condition $d\phi = 0$ is sufficient for $\Ann(\phi)$ to be a Dirac structure but not necessary.

The graph of a presymplectic form $\omega$ is the annihilator of the pure spinor $\phi = e^{-\omega} = 1 - \omega + \frac{1}{2} \omega \wedge \omega + \ldots$, since
\begin{equation*}
 0 = (X + \alpha) \cdot e^{-\omega}
 = (- i_X \omega + \alpha ) \wedge e^{-\omega}
\end{equation*}
holds if and only if $\alpha = i_X \omega$.

\begin{Proposition}
\label{prop:SpinorRemove}
A singularity of the closed 2-form $\omega \in \Omega^2(M \setminus M_\mathrm{sing})$ at $m \in M_\mathrm{sing}$ is removable if and only if there is a smooth function $f \in C^\infty(U)$ on a neighborhood $U$ of $m$ such that $f e^{-\omega}$ extends to a nowhere vanishing differential form on $U$.
\end{Proposition}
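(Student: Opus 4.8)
The plan is to translate the removability condition into a statement about the pure spinor $e^{-\omega}$ and its behavior at the singularity. Recall that $\Graph(\omega)$ is the annihilator of $e^{-\omega}$, and that by the general Assumption the extension of the Dirac structure over $M_\mathrm{sing}$ is unique when it exists. The key bridge is the correspondence, recalled just above the statement, between smooth families of maximally isotropic subbundles of $TM \oplus T^*M$ and smooth families of pure spinors taken up to multiplication by a nowhere-vanishing scalar function.

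For the ``if'' direction I would assume that $f e^{-\omega}$ extends to a nowhere-vanishing smooth form $\psi$ on $U$. First I would check that $\psi$ is a pure spinor at $m$: purity is a closed condition on each fiber (it can be phrased as the vanishing of the appropriate Mukai-type pairings), and since $\psi$ agrees with $f e^{-\omega}$ on the dense set $U \setminus M_\mathrm{sing}$ where it is manifestly pure, purity extends to $m$ by continuity. Then $\Ann(\psi)$ is a smooth maximally isotropic subbundle over all of $U$, and over $U \setminus M_\mathrm{sing}$ we have $\Ann(\psi) = \Ann(f e^{-\omega}) = \Ann(e^{-\omega}) = \Graph(\omega)$, using that annihilators are unchanged by a nonzero scalar. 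Hence $\Ann(\psi)$ is the closure of $\Graph(\omega)$, and by Remark~\ref{rmk:smoothimplied} it is automatically a Dirac structure once it is a smooth subbundle; this is exactly removability.

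For the ``only if'' direction I would start from the assumption that $L := \overline{\Graph(\omega)}$ is a smooth Dirac structure on a neighborhood of $m$. Since $L$ is a smooth maximally isotropic subbundle, it is locally the annihilator of a smooth nowhere-vanishing pure spinor $\psi$ on some neighborhood $U$ of $m$ (this is the local existence of a spinor line, shrinking $U$ so that the line is trivialized by a smooth section). On the dense open set $U \setminus M_\mathrm{sing}$ both $\psi$ and $e^{-\omega}$ annihilate $L = \Graph(\omega)$, so by uniqueness of the pure spinor up to scalar there is a nowhere-vanishing smooth function $g$ on $U \setminus M_\mathrm{sing}$ with $\psi = g\, e^{-\omega}$. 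The degree-zero component gives $g = \psi^{(0)}$, the scalar part of $\psi$, which is a smooth function on all of $U$; setting $f := \psi^{(0)}$ we get $f e^{-\omega} = \psi$ on the dense set, so $f e^{-\omega}$ extends to the nowhere-vanishing smooth form $\psi$ on $U$, as required.

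The main obstacle I expect is the ``only if'' step of producing a genuinely smooth spinor line near the singular point and controlling its degree-zero part: one must know that a smooth maximally isotropic subbundle admits, at least locally, a smooth nowhere-vanishing pure spinor generating its annihilator, and that the resulting identification $\psi = g\,e^{-\omega}$ has a scalar component that is smooth and nonvanishing at $m$ (a priori it is only defined off $M_\mathrm{sing}$). Resolving this requires the local triviality of the canonical spinor line bundle associated to $L$ together with a careful comparison of degree-zero parts; everything else reduces to the formal properties of the Clifford action and the density of $U \setminus M_\mathrm{sing}$, which make the isotropy and integrability conditions automatic.
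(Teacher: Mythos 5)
Your argument is correct and follows essentially the same route as the paper: both directions rest on the pure spinor description of $\Graph(\omega)$, with the regularizing function read off as the degree-zero component of the extended spinor and the converse reduced to Remark~\ref{rmk:smoothimplied}. One small caveat about your closing paragraph: $f=\psi^{(0)}$ need not be nonvanishing at $m$ (only the full form $\psi$ must be nowhere vanishing), and indeed Cor.~\ref{cor:FiniteNotRemovable}(ii) shows that $f$ must vanish at a genuine removable singularity.
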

\begin{proof}
Assume that the singularity is removable. Then there is a pure spinor field $\phi$ on a neighborhood $U$ of $m$ such that $\Ann(\phi)$ is the smooth Dirac structure extending $\Graph(\omega) = \Ann(e^{-\omega})$. Since the annihilators of $\phi$ and $e^{-\omega}$ are equal on $U \setminus M_\mathrm{sing}$ there is a function $\tilde{f}$ on $U \setminus M_\mathrm{sing}$ such that
\begin{equation*}
 \phi\bigr|_{U \setminus M_\mathrm{sing}} = \tilde{f} e^{-\omega} = \tilde{f} - \tilde{f}\omega 
      + \tilde{f} \tfrac{1}{2}\omega \wedge \omega + \ldots \,.
\end{equation*}
Since this pure spinor extends smoothly to the singular points in $U$, every term on the right hand side must extend smoothly. Therefore, $\tilde{f}$ must extend to a smooth function $f$ on $U$.

Conversely, assume that $f \in C^\infty(U)$ is such that $f e^{-\omega}$ extends to a smooth, nowhere vanishing differential form $\phi$ on $U$. Then $\Ann(\phi)$ is a smooth subbundle of $TM \oplus T^* M$ that restricts to $\Graph(\omega)$. According to Remark~\ref{rmk:smoothimplied} this is all we need to show.
\end{proof}

We call the function $f$ of the proposition a \emph{regularizing} function of the singularity. From the last theorem we can derive three rather crude yet useful obstructions to the removability of a singularity:

\begin{Corollary}
\label{cor:FiniteNotRemovable}
Let $\omega \in \Omega^2(M \setminus M_\mathrm{sing})$ be a closed 2-form. Let
$f := \| e^{-\omega}\|^{-1}$ which is a smooth function on $M \setminus M_\mathrm{sing}$. A singularity of $\omega$ at $m \in M_\mathrm{sing}$ is not removable if any one of the following obstructions applies:
\begin{itemize}
 \item[(i)] $f:= \| e^{-\omega}\|^{-1}$ does not extend continuously to $m$.
 \item[(ii)] $f$ extends continuously to $m$ but $f(m) \neq 0$.
 \item[(iii)] The singularity is bounded.
\end{itemize}
\end{Corollary}
\begin{proof}
Assume that the singularity is removable. By Prop.~\ref{prop:SpinorRemove} the Dirac structure is the annihilator of a pure spinor $\phi$. Without loss of generality we can choose $\phi$ to be normalized, such that on $M \setminus M_\mathrm{sing}$ we have $1 = \| \phi \| = \| g e^{-\omega} \| = |g| \, f^{-1}$, where $g$ is the regularizing function.

(i) The regularizing function $g$ is continuous at $m$, so $|g|$ is a continuous extension of $f$ to $m$.

(ii) Let $\bar{f}$ be the continuous extension of $f$ to $m$. Assume that $\bar{f}(m) = |g(m)| \neq 0$. Then there is a neighborhood of $m$ on which $g^{-1}$ is defined. Hence, $\phi g^{-1} = e^{-\omega} = 1 - \omega + \ldots$ extends smoothly to a neighborhood of $m$. This implies that $\omega$ extends smoothly to that neighborhood which contradicts the assumption that $\omega$ as a singularity at $m$.

(iii) Assume that the singularity is bounded. Then there is a constant $C$ such that $\| \omega \| \leq C$ on a neighborhood of $m$. We can assume that the norm is submultiplicative, i.e.\  $\| \omega \wedge \omega' \| \leq \| \omega \| \, \| \omega' \|$. Then the regularizing function satisfies
\begin{equation*}
 |g| = \frac{1}{\| e^{-\omega} \|} \geq \frac{1}{e^{\|\omega\|}} \geq \frac{1}{e^{C}} \,,
\end{equation*}
so it does not vanish at $m$. Hence, obstruction (ii) applies.
\end{proof}

The following example shows that even if none of the three obstructions of Cor.~\ref{cor:FiniteNotRemovable} applies a singularity can still fail to be removable.

\begin{Example}
\label{ex:rhoCylinder}
Let $M := \bbR^{3}$, $M_\mathrm{sing} := \{(x,y,z) ~|~ x = y = 0 \}$ and $\rho := \sqrt{x^2 + y^2}$. Consider the closed 2-form
\begin{equation*}
  \omega
  = - d(\rho^{-1}) \wedge dz
  = \frac{x\,dx + y\,dy}{\rho^3} \wedge dz \,,
\end{equation*}
which has a singularity at $\rho = 0$ that is not bounded. Equipping the bundle $\wedge^\bullet T^*\bbR^3$ with the norm induced by the metric for which the monomials of the coordinate 1-forms are an orthonormal basis we obtain
\begin{equation*}
 f = \|e^{-\omega}\|^{-1} = \frac{\rho^2}{\sqrt{1 + \rho^4}} \,,
\end{equation*}
which extends smoothly to $\rho = 0$ where it vanishes. We conclude that none of three obstructions of Cor~\ref{cor:FiniteNotRemovable} applies. If the singularity is removable then $f$ must be the absolute value of a regularizing function $g$. Since $M \setminus f^{-1}(0)$ is connected, the only smooth functions $g$ satisfying $f = |g|$ are $g = \pm f$. However, the spinor
\begin{equation*}
 \phi := f e^{-\omega} = f - \frac{x\,dx + y\,dy}{\rho \sqrt{1 + \rho^4} } \wedge dz
\end{equation*}
does not extend smoothly to $\rho=0$. Hence, by Prop.~\ref{prop:SpinorRemove} the singularity is not removable.
\end{Example}

The upshot of the pure spinor approach is the following procedure to determine whether a singularity at $m$ is removable or not:
\begin{enumerate}
 \item Choose local coordinates $\{x^1, \ldots, x^n\}$ on a neighborhood $U \ni m$.
 \item Equip $\wedge^\bullet T^*M$ with a convenient  fiber-wise metric, e.g.\ by letting the standard basis of ordered monomials of the coordinate 1-forms $dx^i$ be orthonormal.
 \item Compute the function $f = \| e^{-\omega} \|^{-1}$. Check whether the obstructions of Cor.~\ref{cor:FiniteNotRemovable} apply.
 \item Find all smooth functions $g$ on $U$ such that $|g|$ is a continuous extension of $f$. If $U \setminus f^{-1}(0)$ is connected, the only such functions are $g = \pm f$.
 \item Check whether the $2k$-form $g\omega^k$ extends smoothly to a neighborhood of $m$ for every $0 \leq 2k \leq n$.
\end{enumerate}

\begin{Corollary}
\label{cor:SplitRemove}
Assume that we can split the closed 2-form $\omega \in \Omega^2(M \setminus M_\mathrm{sing})$ on a neighborhood of a singular point $m \in M_\mathrm{sing}$ as
\begin{equation*}
 \omega = \omega_\mathrm{reg} + \omega_\mathrm{sing} \,,
\end{equation*}
where $\omega_\mathrm{reg}$ extends smoothly to a neighborhood of $m$. Then the singularity of $\omega$ is removable if and only if the singularity of $\omega_\mathrm{sing}$ is.
\end{Corollary}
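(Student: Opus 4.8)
The plan is to reduce everything to the spinor criterion of Proposition~\ref{prop:SpinorRemove} and to exploit that the assignment $\omega \mapsto e^{-\omega}$ turns the additive splitting $\omega = \omega_\mathrm{reg} + \omega_\mathrm{sing}$ into a multiplicative one. Since $\omega_\mathrm{reg}$ and $\omega_\mathrm{sing}$ are both $2$-forms, they lie in the even, hence central, part of the exterior algebra, so they commute under the wedge product and
\[
 e^{-\omega} = e^{-\omega_\mathrm{reg}} \wedge e^{-\omega_\mathrm{sing}} \,.
\]
The factor $e^{-\omega_\mathrm{reg}}$ is, by hypothesis, a smooth inhomogeneous form on a neighborhood $U$ of $m$; its scalar part is the constant $1$, so at every point it is a unit of the exterior algebra with inverse $e^{\omega_\mathrm{reg}}$, which is again smooth on $U$.

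First I would record the one genuinely nontrivial point: wedging by such a smooth unit preserves both smoothness and the nowhere-vanishing condition. At each point $p \in U$ the map $\psi_p \mapsto e^{-\omega_\mathrm{reg}}(p) \wedge \psi_p$ on $\wedge^\bullet T_p^* M$ is a linear isomorphism, with inverse $\psi_p \mapsto e^{\omega_\mathrm{reg}}(p) \wedge \psi_p$; hence it carries nonzero forms to nonzero forms and smooth families to smooth families. The same holds for the inverse operation, wedging by $e^{\omega_\mathrm{reg}}$.

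For the ``if'' direction I would assume the singularity of $\omega_\mathrm{sing}$ is removable and choose, via Proposition~\ref{prop:SpinorRemove}, a regularizing function $f \in C^\infty(U)$ so that $f e^{-\omega_\mathrm{sing}}$ extends to a smooth, nowhere vanishing form on $U$. Then
\[
 f e^{-\omega} = e^{-\omega_\mathrm{reg}} \wedge \bigl( f e^{-\omega_\mathrm{sing}} \bigr)
\]
extends to a smooth, nowhere vanishing form by the observation above, so the same $f$ regularizes $\omega$ and its singularity is removable. The ``only if'' direction is symmetric: if $f$ regularizes $\omega$, then
\[
 f e^{-\omega_\mathrm{sing}} = e^{\omega_\mathrm{reg}} \wedge \bigl( f e^{-\omega} \bigr)
\]
extends to a smooth, nowhere vanishing form, so $f$ regularizes $\omega_\mathrm{sing}$ as well.

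I expect the only place needing care to be the second paragraph---confirming that the $B$-field factor $e^{\pm\omega_\mathrm{reg}}$ acts as a fiberwise-invertible, smoothness-preserving operator on forms, so that a single regularizing function can be transported back and forth between $\omega$ and $\omega_\mathrm{sing}$. Everything else is a direct application of Proposition~\ref{prop:SpinorRemove}.
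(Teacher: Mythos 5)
Your proposal is correct and follows essentially the same route as the paper: both arguments reduce to Proposition~\ref{prop:SpinorRemove} and use that wedging with the smooth extension of $e^{\pm\omega_\mathrm{reg}}$ is a fiberwise-invertible, smoothness-preserving operation, so that $f e^{-\omega}$ extends to a smooth nowhere-vanishing form if and only if $f e^{-\omega_\mathrm{sing}}$ does. You merely spell out the invertibility of the wedge operator in more detail than the paper does.
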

\begin{proof}
Let $\bar{\omega}_\mathrm{reg}$ be the smooth extension of $\omega_\mathrm{reg}$. Let $f$ be a smooth function. The form $\phi = f e^{-\omega}$ extends smoothly to a non-vanishing form on a neighborhood of $m$ if and only if $\phi' := \phi \wedge e^{\bar{\omega}_\mathrm{reg}} = f e^{-\omega} \wedge e^{\bar{\omega}_\mathrm{reg}} = f e^{-\omega_\mathrm{sing}}$ does.
\end{proof}

\begin{Remark}
It is quite obvious that a Dirac structure $L$ has a smooth extension to a singular point if and only if any gauge transformed Dirac structure $e^{\omega_\mathrm{reg}} L$ has a smooth extension as well, which implies Cor.~\ref{cor:SplitRemove} without using spinors. The spinorial point of view shows that $f$ is a regularizing function of $\omega$ if and only if it is a regularizing function of $\omega_\mathrm{sing}$.
\end{Remark}

The symplectic form of the guiding example, $\omega = \omega_0 + B$, is of this type with $\omega_\mathrm{reg} = \omega_0$ the standard symplectic form of the cotangent bundle and $\omega_\mathrm{sing} = B$ the 2-form of a magnetic monopole in 2 dimensions.

\section{The splitting theorem for Dirac structures}
\label{sec:SplittingTheorem}

\subsection{Motivation and statement of the theorem}

In order to gain further insight into the structure of removable singularities we will prove a splitting theorem for Dirac structures. The classic example of a local splitting theorem is the Weinstein splitting theorem for Poisson structures which states that in a neighborhood of a point $m \in M$ a Poisson manifold is the product of a symplectic manifold and a manifold with a Poisson structure that vanishes at $m$ (Theorem 2.1 in \cite{Weinstein:Splitting1983}). Our theorem will be similar.

We recall that the product of two Dirac manifolds  $(L,M)$ and $(L',M')$ is given by the product of the vector bundles
\begin{equation*}
\begin{split}
 L \times L'
 &:= \{ (X, X') \oplus (\alpha, \alpha')~|~ 
  X + \alpha \in L\,, X' + \alpha' \in L'\} \\
 &\subset T(M\times M') \oplus T^*(M \times M') \,.
\end{split}
\end{equation*}
The anchor is given by $\rho_{L\times L'}(a,a') = \rho(a)+\rho(a') \in T(M \times M')$, and the Lie bracket by $[(a,a'),(b,b')] = ([a,b], [a',b'])$ for sections $a,b \in \Gamma(L)$, $a',b' \in \Gamma(L')$ which extends to more general sections by the Leibniz rule.

The splitting theorem will also involve the deformation of a Dirac structure $L$ in the direction of a closed 2-form $\omega$, called gauge transformation in~\cite{SeveraWeinstein:2001} and $B$-field transform in \cite{Gualtieri:Annals2011}:
\begin{equation}
\label{eq:BfieldTransform}
 e^{\omega} L := \{ X + \alpha + i_X \omega ~|~ X + \alpha \in L \} \,,
\end{equation}
which is again a Dirac structure. In fact, it was shown in \cite{Gualtieri:Annals2011} that the automorphism group of the exact Courant algebroid $TM \oplus T^*M$ is given by the semi-direct product $\Omega_\mathrm{cl}^2(M) \rtimes \Diff(M)$, where the additive group $\Omega_\mathrm{cl}^2(M)$ acts by Eq.~\eqref{eq:BfieldTransform} and the diffeomorphisms by push-forward.

By Prop.~\ref{prop:AnchorSurjective} a Dirac structure that removes a singularity at $m$ is neither the graph of a presymplectic form nor of a bivector field if $\rho(L)$ does not have full dimension and $L \cap TM$ is not zero at $m$. This suggests that $\rho(L)$ and $L \cap TM$ determine the directions in $M$ that are relevant for a local splitting theorem of Dirac structures.

\begin{Remark}
\label{rmk:NullNonIntegrable}
Since $L \cap TM = \pr_{T^*M}(L)^\circ$ and since $\dim \bigl( \pr_{T^* M}(L) \bigr)$ is lower semi-continuous, as it is the case for every image of a map of vector bundles, $\dim(L \cap TM)$ is upper semi-continuous. The dimension of a smooth singular foliation in the sense of Stefan-Sussman is lower semi-continuous. Therefore, $L \cap TM$ is integrable if and only if it is regular.
\end{Remark}

\begin{Theorem}[Splitting theorem]
\label{thm:SplittingTheorem}
Let $L \subset TM \oplus T^*M$ be a Dirac structure on $M$. Every $m \in M$ has an embedded neighborhood $\phi: S \times N \hookrightarrow M$, $(s,n) \mapsto m$ such that
\begin{equation*}
 \phi^* L = e^{\omega}\bigl( TS \times \Graph(\Pi) \bigr) \,,
\end{equation*}
where $\Pi$ is a Poisson bivector on $N$ and $\omega$ is a closed 2-form that vanishes at $(s,n)$. \end{Theorem}

\begin{Remark}
\label{rmk:SplittingGeo}
Before we prove this theorem, let us comment on the geometry of the splitting. The two embedded submanifolds
\begin{equation*}
 \tilde{S} := \phi(S \times \{n\}) \,,\qquad
 \tilde{N} := \phi(\{s\} \times N) 
\end{equation*}
intersect transversely in $m$. Since the image of the anchor is invariant under gauge transformations, the submanifold $\tilde{S}$ is tangent to the presymplectic leaf of $L$ through $m$. Moreover, since $\omega$ vanishes at $(s,n)$, we have
\begin{equation*}
 T_m \tilde{S} = L \cap T_m M \,,
\end{equation*}
which is the obstruction for $L$ to be the graph of Poisson bivector. Therefore, $\tilde{N}$ is a \emph{maximal Poisson transversal}, i.e.\ a maximal Poisson submanifold that intersects the presymplectic leaf of $L$ through $m$ transversely and symplectically.
\end{Remark}

\subsection{Two lemmas}

For the proof of Theorem~\ref{thm:SplittingTheorem} we need two basic lemmas:

\begin{Lemma}
\label{lem:ClosedOmega}
Let $X + \alpha \in \Gamma^\infty(M,L)$ be a section of the Dirac structure $L$. If $X$ does not vanish at $m$, then there is a closed 2-form $\omega$ defined on a neighborhood of $m$ such that locally $\alpha = i_X \omega$. Moreover, if $\alpha_m = 0$ then $\omega$ can be chosen such that it vanishes at $m$.
\end{Lemma}

\begin{proof}
Since $X$ does not vanish at $m$, there are local coordinates $x, y^1, \ldots, y^{n-1}$ such that $m=0$ and $X = \frac{\partial}{\partial x}$. Since by the isotropy of the Dirac structure $i_X \alpha = \langle X + \alpha, X +\alpha \rangle = 0$, we have $\alpha = \alpha_i dy^i$. Let $f_i$ be functions such that $\frac{\partial f_i}{\partial x} = \alpha_i$ and $f_i(x=0) = 0$. Set $\gamma := f_i dy^i$, the de Rham differential of which is given by
\begin{equation*}
 \omega := d\gamma
 = \alpha_i dx \wedge dy^i + \frac{\partial f_i}{\partial y^j} \,  dy^j \wedge dy^i
  \,.
\end{equation*}
We conclude that $i_X \omega = i_{\frac{\partial}{\partial x}} \omega = \alpha_i dy^i = \alpha$. Moreover, since $f_i(x=0) = 0$ the partial derivatives in the direction of the hyperplane given by $x=0$ vanish as well, $\frac{\partial f_i}{\partial y^j}(x = 0) =0$. It follows that $\omega(0) = \alpha_i(0)\, dx \wedge dy^i$, which vanishes when $\alpha(0) = 0$.
\end{proof}

\begin{Lemma}
\label{lem:LineSplit}
Let $L$ be a Dirac structure $L$ over $M$. Assume that in a neighborhood of $m \in M$ there is a section of the form $X + 0 = X$ such that the vector field $X$ does not vanish at $m$. Let $M' \subset M$ be a submanifold of codimension 1 through $m$ such that $X_m \notin T M'$. Then $L$ is locally the product of the Dirac structure $\Span\{X\}$ restricted to the integral curve of $X$ and a Dirac structure over $M'$.
\end{Lemma}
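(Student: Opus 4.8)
The plan is to use the flow of $X$ to straighten the problem and then read off the product structure fibrewise, using integrability only at the single decisive step. Since $X$ does not vanish at $m$ and $X_m \notin T_m M'$, the flow-out map $(t,m') \mapsto \varphi_t(m')$ identifies a neighborhood of $m$ with $I \times M'$, where $I$ is an interval, $M' = \{t=0\}$, and $X = \frac{\partial}{\partial t}$. I would work throughout in adapted coordinates $t, y^1, \ldots, y^{n-1}$ for this identification, with $n = \dim M$.

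First I would carry out a purely pointwise step exploiting that $X + 0$ is an isotropic section of $L$. For any $Y + \beta \in \Gamma(L)$ the isotropy of $L$ gives $0 = \langle X + 0, Y + \beta\rangle_+ = \tfrac12\,\beta(X)$, so the $dt$-component of every covector occurring in $L$ vanishes. Together with $X \in L$ this produces at each point $p = (t,y)$ a direct-sum decomposition $L_p = \bbR\frac{\partial}{\partial t} \oplus L'_p$, where $L'_p := L_p \cap (T_y M' \oplus T^*_y M')$ involves neither $\frac{\partial}{\partial t}$ nor $dt$. Since $\dim L = n$, a dimension count gives $\dim L'_p = n-1 = \dim M'$, and $L'_p$ is isotropic; as the decomposition has constant rank, $L'$ is a smooth subbundle.

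The heart of the argument is to show that $L'_p$ is independent of $t$, and this is where integrability enters. Because $X + 0 \in \Gamma(L)$ and $L$ is closed under the Dorfman bracket, $[X + 0, Y + \beta] = [X,Y] + \Lie_X\beta \in \Gamma(L)$ for every section $Y + \beta$ of $L$; that is, $L$ is invariant under the lift of $\Lie_X$ to $TM \oplus T^*M$, which in these coordinates merely differentiates the component functions with respect to $t$. I would choose a frame of $L'$ along $M' = \{t=0\}$ and extend it to sections $s_1, \ldots, s_{n-1}$ with $t$-constant coefficients. The invariance of $L$ descends to a first-order linear operator along $t$ on the quotient bundle $Q := (TM \oplus T^*M)/L$; applied to $P(s_\alpha)$, where $P$ is the projection onto $Q$, it yields a homogeneous linear ODE in $t$ with initial value $P(s_\alpha)|_{t=0} = 0$, so by uniqueness $P(s_\alpha) \equiv 0$ and hence $s_\alpha \in \Gamma(L)$ for all $t$. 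Thus $\{\frac{\partial}{\partial t}, s_1, \ldots, s_{n-1}\}$ is a frame of $L$ on the whole neighborhood, the $s_\alpha$ are pulled back from $M'$, and $L = TI \times L'$ with $L' := L|_{M'} \cap (TM' \oplus T^*M')$.

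Finally I would check that both factors are Dirac structures. Over the one-dimensional $I$ the distribution $\Span\{X\}$ is all of $TI = \Graph(0)$, which is a Dirac structure. For $L'$, maximal isotropy follows from the dimension count and the inherited isotropy above; involutivity follows because the Dorfman bracket on $M$ of two $t$-independent sections of $L'$ has no $\frac{\partial}{\partial t}$- or $dt$-component, is again $t$-independent, lies in $\Gamma(L)$ by closure, hence lies in $\Gamma(L')$, and this bracket is exactly the Dorfman bracket computed on $M'$. The hard part will be precisely the passage from infinitesimal invariance (bracket closure) to the genuine $t$-independence of the fibres $L'_p$: one must set up the quotient bundle $Q$ and its induced first-order operator carefully and invoke ODE uniqueness, rather than merely citing flow invariance. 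The pointwise isotropy bookkeeping and the verification that $L'$ is Dirac are then routine.
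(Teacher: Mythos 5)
Your proof is correct and follows essentially the same route as the paper: straighten $X$ to $\partial/\partial t$, observe that the Dorfman bracket with $X+0$ acts as $\partial/\partial t$ on the coefficients of sections and preserves $\Gamma(L)$, and use a linear ODE in $t$ to produce a $t$-independent frame of the transverse part. The only cosmetic difference is that the paper solves the ODE for a change-of-frame matrix $U_{ij}$ inside $L$ (and then checks via the Jacobi identity that the transverse structure constants are $t$-independent), whereas you run the equivalent uniqueness argument on the quotient bundle $(TM \oplus T^*M)/L$.
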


\begin{proof}
Choose local coordinates $\{x, y^1, \ldots, y^{n-1}\}$ such that $X = \frac{\partial}{\partial x}$, that $m=0$, and that $M'$ is locally isomorphic to the hyperplane given by $x=0$. We can find a basis of local sections of the form
\begin{equation*}
 a = \frac{\partial}{\partial x} + 0 \,,\quad
 b_i = A_{ij} \frac{\partial}{\partial y^j} + B_{ij} dy^j \,,
\end{equation*}
where the indices $i$, $j$ run from $1$ through $n-1$. Since $[\rho(a),\rho(b_i)]$ does not have a component in the direction of $\frac{\partial}{\partial x}$ we have $[a,b_i] = F_{ij}b_j$ for some functions $F_{ij}$. Let $U_{ij}$ be the solution of the differential equation $\frac{\partial U_{ij}}{\partial x} = - U_{ik} F_{kj}$ with initial value $U_{ij}(x=0) = \delta_{ij}$. For $x$ small enough $U_{ij}$ is an invertible matrix, so that $\{a, b'_i := U_{ij} b_j \}$ is again a basis of local sections of $L$. We compute
\begin{equation*}
\begin{split}
 [a, b'_i] 
 &= (\rho(a) U_{ij}) b_j + U_{ik} F_{kj} b_j \\
 &= \Bigl( \frac{\partial U_{ij}}{\partial x} + U_{ik} F_{kj} \Bigr) b_j \\
 &= 0 \,.
\end{split}
\end{equation*}
We further note that $[b'_i, b'_j] = C_{ij}^k b'_k$ for some functions $C_{ij}^k$. Since $[a, [b'_i, b'_j]] = [[a,b'_i], b'_j] + [b'_i, [a, b'_j]] = 0$ it follows that
\begin{equation*}
 0 = [a, [b'_i, b'_j]] = [a, C_{ij}^k b'_k] = \frac{\partial C_{ij}^k}{\partial x} b'_k \,,
\end{equation*}
which shows that $C_{ij}^k$ does not depend on $x$.

It follows that $L$ is locally isomorphic to the Lie algebroid spanned by $\{a\}$ restricted to the line given by $y^1 = \ldots = y^{n-1}$ and the Lie algebroid $\{ b'_1, \ldots, b'_{n-1} \}$ restricted to the normal hyperplane given by $x=0$.
\end{proof}

\subsection{Proof of Theorem~\ref{thm:SplittingTheorem}}

The proof is by induction over
\begin{equation*}
 p = \dim (L \cap T_m M) \,.
\end{equation*}
If $p=0$, then $\pr_{T^*M}: L \to M$ is surjective at $m$ and, thus, surjective on a neighborhood of $m$. Therefore, $L$ restricted to that neighborhood is already the graph of a Poisson structure $\Pi$. So the statement is trivially true for $S$ a point and $\omega = 0$.

Let now $p > 0$. Since $L \cap T_m M \subset \rho(L_m)$, there is a nowhere vanishing vector field $X$ defined on a neighborhood $U$ of $m$ such that $X$ is tangent to $\rho(L)$ and $X_m \in L \cap T_m M$. This means that there is a local section $X + \alpha$ of $L$ and that $\alpha_m = 0$. We can now apply Lemma~\ref{lem:ClosedOmega}: There is a closed 2-form $\tilde{\omega}$ that vanishes at $m$ such that $i_X \tilde{\omega} = \alpha$.

It follows that $X$ is a section of the gauge transformed Dirac structure $e^{-\tilde{\omega}} L|_U$. Let $C$ be the integral curve of $X$ through $m$ and let $M'$ be a codimension 1 transversal to $C$ through $m$. Now we can apply Lemma~\ref{lem:LineSplit}, which states that, after suitably shrinking $C$ and $U$, there is an embedding $\psi: C \times M' \hookrightarrow M$ with $\psi(m,m) = m$ such that
\begin{equation*}
 \psi^* e^{-\tilde{\omega}} L = TC \times L' \,,
\end{equation*}
where $L'$ is a Lie algebroid over $M'$. This is equivalent to
\begin{equation*}
 \psi^* L = e^{\psi^*\tilde{\omega}} (TC \times L') \,,
\end{equation*}
where we have used the relation $\psi^* e^{-\tilde{\omega}} =  e^{-\psi^* \tilde{\omega}} \psi^*$ for gauge transformations, which was proved in \cite{Gualtieri:Annals2011}.

In order to apply the induction hypothesis to $L'$, we need to show that $\dim (L' \cap T_m M') = \dim (L \cap T_m M) - 1 = p -1$. Recall that $\rho^*: L \to T^*M$ is the map induced by the projection $TM \oplus T^*M \to T^*M$. Since $\tilde{\omega}$ vanishes at $m$, we have $L_m =  e^{-\tilde{\omega}} L_m$. We thus obtain
\begin{equation*}
\begin{split}
 \dim( \rho^* L_m )
 &= \dim ( \rho^* e^{-\tilde{\omega}} L_m )
 = \dim ( \rho^* \psi^* e^{-\tilde{\omega}} L_m ) \\
 &= \dim\bigl( \rho^* (T_m C \times L'_m ) \bigr)
 = \dim(\rho^* T_m C) + \dim(\rho^* L'_m ) \\
 &= \dim(\rho^* L'_m) \,.
\end{split}
\end{equation*}
Using the fundamental relation $(L \cap TM)^\circ = \rho^* L$ for Dirac structures, which implies $\dim M - \dim (L \cap TM) = \dim \rho^* L$, we compute
\begin{equation*}
\begin{split}
 \dim (L' \cap T_m M') 
 &= \dim \bigl( (\rho^* L'_m)^\circ \bigr) \\
 &= \dim M' - \dim ( \rho^* L'_m ) \\
 &= \dim M  - 1 - \dim ( \rho^* L_m ) \\
 &= \dim (L \cap T_m M) - 1 \,.
\end{split}
\end{equation*}

Now we can use the induction hypothesis which states that there is an embedding $\chi: S' \times N \hookrightarrow M'$ with $\chi(s',n) = m$ such that
\begin{equation*}
 \chi^* L' = e^{\omega'} \bigl( TS' \times \Graph(\Pi) \bigr) \,,
\end{equation*}
where $\omega'$ is a closed 2-form on $S' \times N$ that vanishes at $(s,n)$ and where $\Pi$ is a Poisson bivector on $N$.

Composing the two embeddings, we obtain the embedding
\begin{equation*}
 \phi := \psi \circ (\id_C \times \chi):
 C \times S' \times N \longhookrightarrow  M \,,
\end{equation*}
which maps $(m,s',n)$ to $m$. Setting $S := C \times S'$ and $s := (m,s')$, we can view this as an embedding $\phi: S \times N \hookrightarrow M$ with $\phi(s,n) = m$. We obtain the following equality of Lie algebroids
\begin{equation*}
\begin{aligned}
 \phi^* L 
 &= (\id_C \times \chi)^* \psi^* L \\
 &= (\id_C \times \chi)^* e^{\psi^*\tilde{\omega}} 
    (TC \times L') \\
 &= e^{(\id_C \times \chi)^* \psi^* \tilde{\omega}} 
    (\id_C \times \chi)^* (TC \times L') \\
 &= e^{\phi^*\tilde{\omega}} 
    \bigl( TC \times e^{\omega'}(TS' \times \Graph(\Pi) )\bigr) \\
 &= e^{\omega} \bigl( TS \times \Graph(\Pi) \bigr) \,, 
\end{aligned}
\end{equation*}
where $\omega := \phi^*\tilde{\omega} + \pi^* \omega'$ and where $\pi:C \times S' \times N \to S' \times N$ is the projection.

Finally, we observe that since $\tilde{\omega}$ and $\omega'$ are both closed, $\omega$ is closed as well. Furthermore, since $\tilde{\omega}$ is zero at $m = \phi(s,n)$, $\phi^* \tilde{\omega}$ is zero at $(s,n)$. Since $\omega'$ is zero at $(s',n) = \pi(s,n)$, $\pi^* \omega'$ is zero at $(s,n)$. It follows that $\omega$ is zero at $(s,n)$.
\qed

\subsection{Corollaries of the splitting theorem}

\begin{Corollary}
\label{cor:omegavanish}
A Dirac structure $L$ is locally isomorphic to a product of the form
\begin{equation*}
 L|_U \cong TS \times \Graph(\Pi) \,,
\end{equation*}
where $U \cong S \times N$ is a neighborhood of $m$ and $\Pi$ a Poisson bivector on $N$, if and only if the distribution $L \cap TM$ is regular at $m$.
\end{Corollary}

\begin{proof}
Assume that $L \cap TM$ is regular at $m$. Since $L \cap TM$ is an involutive distribution, it follows that it is integrable. Therefore, we can find local coordinates $(x^1, \ldots, x^p, y^1, \ldots, y^q)$ such that $a_i := \frac{\partial}{\partial x^i}$ are sections of $L$. This means that in the proof of Thm.~\ref{thm:SplittingTheorem} we can take $\omega=0$. Conversely, if $\omega=0$ then $\phi^*(L \cap TM) = TS$, which is regular.
\end{proof}

\begin{Corollary}
\label{cor:ThreeWaySplitting}
Let $L \subset TM \oplus T^*M$ be a Dirac structure on $M$. Every point $m \in M$ has an embedded neighborhood $\phi: S \times N_1 \times N_2 \hookrightarrow M$, $(s, n_1, n_2) \mapsto m$, such that
\begin{equation}
\label{eq:SplittingTheorem}
 \phi^* L = e^{\omega}\bigl( TS \times \Graph(\omega_1) \times \Graph(\Pi_2) \bigr) \,,
\end{equation}
where $\omega$ is a closed 2-form that vanishes at $(s, n_1, n_2)$, $\omega_1$ is a symplectic form on $N_1$, and $\Pi_2$ is a Poisson bivector on $N_2$ that vanishes at $n_2$. Moreover, $\Pi_2 = 0$ if and only if $\rho(L)$ is regular at $m$.
\end{Corollary}

\begin{proof}
After applying the splitting theorem~\ref{thm:SplittingTheorem}, we apply the Weinstein splitting theorem \cite{Weinstein:Splitting1983} to the Poisson manifold $(N, \Pi)$: After shrinking $N$, there is an isomorphism $N_1 \times N_2 \to N$, $(n_1, n_2) \mapsto n$ of Poisson manifolds, where $(N_1, \omega_1)$ is a symplectic manifold and $(N_2, \Pi_2)$ a Poisson manifold such that $\Pi_2$ vanishes at $n_2$.

Moreover, since $\rho\bigl( \Graph(\omega_1) \bigr) = TN_1$, we have $TS \times TN_1 \subset \rho(\phi^* L)$. Since $\Pi_2$ vanishes at $n_2$, we also have $T_m S \oplus T_m N_1 = \rho_m (\phi^* L)$. Assume that $\rho(L)$ is regular at $m$. Then the dimension of $\rho(\phi^* L)$ is locally constant, which implies that $TS \times TN_1 = \rho(\phi^* L)$. It follows that $T^*N_2 = \rho(\phi^* L)^\circ = \phi^*(L \cap T^* M)$. Therefore, $\Graph(\Pi_2) = T^* N_2$, so that $\Pi_2 = 0$. Conversely, if $\Pi_2 = 0$ it follows that $\rho(\phi^* L) = \phi^*(L \cap T^*M) = T^* N_2$, which is of constant dimension, so that the dimension of $\rho(L)$ is locally constant, as well.
\end{proof}

\begin{Corollary}
If both $\rho(L)$ and $L \cap TM$ are regular distributions at $m$, the splitting~\eqref{eq:SplittingTheorem} takes the form
\begin{equation*}
 \phi^* L = TS \times \Graph(\omega_1) \times T^* N_2 \,,
\end{equation*}
which is Proposition 4.1.2 of Courant's thesis \cite{Courant:Transactions1990}.
\end{Corollary}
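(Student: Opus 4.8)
The plan is to read the statement off directly from the two refinements (i) and (ii) of the Splitting Theorem, the only real issue being that these two refinements must be realized by one and the same splitting. First I would invoke Theorem~\ref{thm:SplittingTheorem} to obtain an embedded neighborhood $\phi: S_0 \times S_1 \times S_2 \hookrightarrow M$ with
\[
 \phi^* L = e^{\omega}\bigl( TS_0 \times \Graph(\omega_1) \times \Graph(\Pi_2) \bigr).
\]
Since $L \cap TM$ is regular at $m$ by hypothesis, part (i) of the theorem guarantees that the splitting can be chosen with $\omega = 0$, so that already $\phi^* L = TS_0 \times \Graph(\omega_1) \times \Graph(\Pi_2)$.

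Second, since $\rho(L)$ is regular at $m$ by hypothesis, part (ii) applies and forces $\Pi_2 = 0$. I would stress that the argument establishing (ii) makes no use of any particular choice in the construction: it only invokes the defining compatibility of the splitting together with the local constancy of $\dim \rho(\phi^* L)$. Hence (ii) applies verbatim to the $\omega = 0$ splitting produced in the first step, and the two conclusions are jointly attainable. The vanishing of $\Pi_2$ then means $\Graph(\Pi_2) = \{0 + \alpha \mid \alpha \in T^* S_2\} = T^* S_2$, since the graph of the zero bivector is the full cotangent bundle.

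Combining the two observations yields
\[
 \phi^* L = TS_0 \times \Graph(\omega_1) \times T^* S_2,
\]
which is the asserted normal form and recovers Proposition 4.1.2 of \cite{Courant:Transactions1990}.

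The single point deserving care is the simultaneity just flagged: (i) only asserts the \emph{existence} of a splitting with $\omega = 0$, while (ii) asserts $\Pi_2 = 0$, and a priori these could be separate splittings. This is not forced by the bare statements, but it is immediate from the proof of (ii), whose derivation of $\Pi_2 = 0$ holds for \emph{every} $L$-compatible splitting once $\rho(L)$ is regular; in particular it holds for the one furnished by (i). No further computation is required.
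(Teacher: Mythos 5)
Your proof is correct and follows the same route as the paper, whose entire proof is ``Follows directly from (i) and (ii) of the theorem.'' Your additional observation that the argument for (ii) applies to \emph{every} compatible splitting --- so that the $\omega=0$ splitting from (i) and the conclusion $\Pi_2=0$ are simultaneously attainable --- is a legitimate point that the paper leaves implicit, and you resolve it correctly.
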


\begin{proof}
Follows immediately from Cor.~\ref{cor:omegavanish} and Cor.~\ref{cor:ThreeWaySplitting}.
\end{proof}

\begin{Corollary}
\label{cor:Splitting2}
Let $L \subset TM \oplus T^*M$ be a Dirac structure on $M$. Every $m \in M$ has an embedded neighborhood $\phi: S' \times N' \hookrightarrow M$, $(s,n) \mapsto m$ such that
\begin{equation*}
 \phi^* L = e^{\omega'}\bigl( TS' \times \Graph(\Pi') \bigr) \,,
\end{equation*}
where $\Pi'$ is a Poisson bivector on $N'$ that vanishes at $n$ and $\omega'$ is a closed 2-form.
\end{Corollary}

\begin{proof}
Since $\Graph(\omega_1) = e^{\omega_1}(TN_1)$ we can rewrite Eq.~\eqref{eq:SplittingTheorem} as
\begin{equation*}
 \phi^* L = e^{\omega + \pi^* \omega_1} 
  \bigl( TS \times \Graph(\omega_1) \times \Graph(\Pi_2) \bigr) \,,
\end{equation*}
where $\pi: S \times N_1 \times N_2 \to N_1$ denotes the projection. Setting $\omega' := \omega + \pi^*\omega_1$, $S' := S \times N_1$, and $N' := N_2$ yields the statement.
\end{proof}

\begin{Remark}
Cor.~\ref{cor:Splitting2} is the equivalent of Theorem~1.4 in \cite{AbouzaidBoyarchenko:2006} for generalized complex structures.
\end{Remark}

\begin{Remark}
\label{rmk:SpinorSplit}
Once we have locally split the Dirac structure as $e^{\omega}\bigl(TS \times \Graph(\Pi) \bigr)$, we can also factor the corresponding pure spinor $\psi \in \Omega(S\times N)$ as
\begin{equation*}
 \psi = e^{-\omega} \wedge \bigl( e^{-\Pi} \cdot \Vol_N \bigr) \,,
\end{equation*}
where $\Vol_N$ is a volume form on $N$ and where we have used the form of the spinor of a Poisson structure given by Prop.~\ref{prop:Pispinor}. 
\end{Remark}

The Poisson bivectors of Theorem~\ref{thm:SplittingTheorem} and Corollary~\ref{cor:Splitting2} can be viewed, respectively, as the maximal and minimal Poisson part of the Dirac structure at $m$. Let us spell out the splittings in local coordinates.

Let $\{x^1, \ldots, x^p\}$ be local coordinates of $S$ and $\{y^1, \ldots, y^q\}$ local coordinates of $N$ such that $m=0$. The closed 2-form $\omega$ and the Poisson bivector $\Pi$ are of the form
\begin{equation}
\label{eq:Defomega}
\begin{aligned}
 \omega &= 
  \tfrac{1}{2} \omega^{xx}_{ij} dx^i \wedge dx^j 
  + \omega^{xy}_{i\alpha} dx^i \wedge dy^\alpha 
  + \tfrac{1}{2} \omega^{yy}_{\alpha\beta} dy^\alpha \wedge dy^\beta   
  \\
 \Pi &= 
  \tfrac{1}{2} \Pi_{\alpha\beta}(y)\, \frac{\partial}{\partial y^\alpha} \wedge 
    \frac{\partial}{\partial y^\beta} \,,
\end{aligned}
\end{equation}
where the Latin indices run through $1 \leq i,j \leq p$ and the Greek indices through $1 \leq \alpha,\beta \leq q$. 

Thm.~\ref{thm:SplittingTheorem} and Cor.~\ref{cor:Splitting2} both state that there is a local frame of $L$ of the form
\begin{equation}
\label{eq:StandardFrame}
\begin{aligned}
 a_i &=  
  \frac{\partial}{\partial x^i} + (\omega^{xx}_{ij} dx^j + \omega^{xy}_{i\beta} dy^\beta ) \\
 b_\alpha &= 
  \Pi_{\alpha\beta} 
  \frac{\partial}{\partial y^\beta } +
  ( dy^\alpha - \Pi_{\alpha\beta} \omega^{xy}_{j \beta} dx^j 
  + \Pi_{\alpha\beta} \omega^{yy}_{\beta\gamma} dy^\gamma ) \,.
\end{aligned}
\end{equation}
In terms of this standard basis the Lie bracket is given by
\begin{equation*}
 [a_i,a_j] = 0 \,,\quad
 [a_i, b_\alpha] = 0 \,,\quad
 [b_\alpha, b_\beta] = \frac{\partial\Pi_{\alpha\beta}}{\partial y^\gamma} b_\gamma \,.
\end{equation*}
The anchor is given by
\begin{equation*}
 \rho(a_i) = \frac{\partial}{\partial x^i} \,,\quad
 \rho(b_\alpha) = \Pi_{\alpha\beta} \frac{\partial}{\partial y^\beta} \,.
\end{equation*}
The Lie algebroid 2-form is given by
\begin{equation*}
\begin{gathered}
 \omega_L(a_i,a_j) = \omega^{xx}_{ij} \,,\quad
 \omega_L(a_i,b_\alpha) = \omega^{xy}_{i\beta} \Pi_{\alpha\beta} \,,\\
 \omega_L(b_\alpha,b_\beta) = \Pi_{\alpha\beta} 
 + \Pi_{\alpha\alpha'}\omega^{yy}_{\alpha'\beta'} \Pi_{\beta'\beta}\,.
\end{gathered}
\end{equation*}
Finally, we observe that in the situation of Thm.~\ref{thm:SplittingTheorem} $\omega$ vanishes at $0$, whereas in the situation of Cor.~\ref{cor:Splitting2} $\Pi$ vanishes for $y=0$.

\section{Splitting of presymplectic forms with singularities}
\label{sec:PresymplecticSplitting}

\subsection{Presymplectic structures with partial inverses}

Let $\omega \in \Omega^2(M \setminus M_\mathrm{sing})$ be a symplectic form and $\Pi = \omega^{-1}$ its inverse Poisson bivector. If $\Pi$ extends to a smooth bivector $\Bar{\Pi}$ on a neighborhood of a singular point $m \in M_\mathrm{sing}$ of $\omega$, then the closure of $\Graph(\omega)$ in that neighborhood is given by $\Graph(\Bar{\Pi})$. Hence, the singularity of $\omega$ is removable. This case can be generalized to presymplectic forms.

\begin{Definition}
Let $\omega \in \Omega^2(M)$ be viewed as map $TM \to T^*M$ and $\Pi \in \calX^2(M)$ viewed as map $T^*M \to TM$. We say that $\omega$ and $\Pi$ are \emph{partial inverses} to each other if
\begin{equation}
\label{eq:PartialInverses}
 \omega \Pi \omega = \omega \,,\quad
 \Pi \omega \Pi = \Pi \,.
\end{equation}
\end{Definition}

\begin{Proposition}
Let $\omega \in \Omega^2(M)$ be a closed 2-form. The following are equivalent:
\begin{itemize}
 \item[(i)] $\omega$ has a partial inverse Poisson bivector on a neighborhood of $m \in M$.
 \item[(ii)] $\omega$ is regular (i.e.\ of locally constant rank) at $m \in M$.
\end{itemize}
\end{Proposition}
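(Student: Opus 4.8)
The plan is to prove the two implications separately. The direction (i)$\Rightarrow$(ii) is an exercise in the linear algebra of idempotents, while (ii)$\Rightarrow$(i) rests on the presymplectic normal form delivered by the splitting theorem.

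For (i)$\Rightarrow$(ii), suppose $\Pi$ is a partial inverse of $\omega$ on a neighborhood of $m$. The composite $\Pi\omega: TM \to TM$ is then a smooth bundle endomorphism which is idempotent, since $(\Pi\omega)(\Pi\omega) = (\Pi\omega\Pi)\omega = \Pi\omega$ by the second relation in \eqref{eq:PartialInverses}. A smooth idempotent has locally constant rank, so both its image and its kernel are smooth subbundles. I would then identify $\ker(\Pi\omega) = \ker\omega$: the inclusion $\ker\omega \subseteq \ker(\Pi\omega)$ is clear, and conversely $\Pi\omega X = 0$ forces $\omega X = \omega\Pi\omega X = 0$ by the first relation. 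Hence $\ker\omega$ is a regular distribution near $m$, which is exactly what it means for $\omega$ to be regular at $m$.

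For (ii)$\Rightarrow$(i), suppose $\omega$ is of locally constant rank near $m$. I would apply Theorem~\ref{thm:SplittingTheorem} to the Dirac structure $L = \Graph(\omega)$. Since $L$ is the graph of a $2$-form its anchor is surjective (Prop.~\ref{prop:AnchorSurjective}), so $\rho(L) = TM$ is regular and part~(ii) of the splitting theorem gives $\Pi_2 = 0$; since $L \cap TM = \ker\omega$ is regular by hypothesis, part~(i) lets me take the $B$-field twist to vanish. Thus $\phi^*\Graph(\omega) = TS_0 \times \Graph(\omega_1) \times T^*S_2$. Because $\Graph(\omega) \cap T^*M = 0$ while the right-hand side contains $T^*S_2$, the factor $S_2$ must reduce to a point, and the normal form collapses to $\phi^*\Graph(\omega) = \Graph(\pr_{S_1}^*\omega_1)$ with $\omega_1$ symplectic on $S_1$. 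On $S_0 \times S_1$ I would then set $\tilde\Pi := \pr_{S_1}^*(\omega_1^{-1})$, the product of the zero bivector on $S_0$ with the symplectic Poisson structure $\omega_1^{-1}$ on $S_1$; this is manifestly Poisson, and in the block decomposition $T(S_0 \times S_1) = TS_0 \oplus TS_1$ the relations $\eta\tilde\Pi\eta = \eta$ and $\tilde\Pi\eta\tilde\Pi = \tilde\Pi$ for $\eta := \pr_{S_1}^*\omega_1$ are immediate. Transporting by the diffeomorphism $\phi$ then yields a Poisson bivector $\Pi := \phi_*\tilde\Pi$ that is a partial inverse of $\omega$ on a neighborhood of $m$.

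The substantive point, and the step I expect to require the most care, is precisely the construction of $\Pi$ in (ii)$\Rightarrow$(i). Pointwise a reflexive generalized inverse of the skew map $\omega_m$ always exists, but there is no reason a pointwise choice should vary smoothly, let alone satisfy the Jacobi identity. It is the constant-rank hypothesis, through the presymplectic normal form, that simultaneously produces a \emph{smooth} and \emph{integrable} $\Pi$: once $\omega$ is the pullback of a symplectic form along a projection, the natural candidate is a product Poisson structure and both partial-inverse identities hold identically. (Alternatively one may invoke the classical constant-rank Darboux theorem $\omega = \sum_i dp_i \wedge dq^i$ and take $\Pi = \sum_i \frac{\partial}{\partial q^i} \wedge \frac{\partial}{\partial p_i}$, whose constant coefficients make it Poisson automatically.)
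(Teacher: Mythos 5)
Your proposal is correct, but both halves depart from the paper's own argument in ways worth recording. For (i)$\Rightarrow$(ii) the paper argues directly from the decomposition $\im\Pi \oplus \ker\omega = TM$ and the lower semi-continuity of the ranks of $\omega$ and $\Pi$; your observation that $\Pi\omega$ is a smooth idempotent with $\ker(\Pi\omega)=\ker\omega$, hence of locally constant rank (e.g.\ because its rank equals its trace, a continuous integer-valued function), is a tighter packaging of the same linear-algebra-plus-semicontinuity idea and if anything closes the argument more transparently. For (ii)$\Rightarrow$(i) the two routes genuinely differ: the paper stays elementary, using closedness of $\omega$ to integrate $\ker\omega$ via Frobenius, choosing coordinates $\{x^i,y^\alpha\}$ adapted to the kernel foliation so that $\omega=\tfrac12\omega_{\alpha\beta}\,dy^\alpha\wedge dy^\beta$ with $\omega_{\alpha\beta}$ invertible and independent of $x$ (the latter by $\Lie_{\partial/\partial x^i}\omega = d\,i_{\partial/\partial x^i}\omega = 0$), and then inverting the transverse block; your primary route instead invokes Theorem~\ref{thm:SplittingTheorem} together with its addenda (i) and (ii) to land on Courant's normal form $TS_0\times\Graph(\omega_1)\times T^*S_2$ and reads off the partial inverse there. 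This is logically sound and not circular (the proposition is not used in the proof of the splitting theorem), and it makes the structural reason for the result vivid, but it routes an elementary fact through the heaviest theorem in the paper, which itself rests on the Weinstein splitting theorem. Your parenthetical alternative via the constant-rank Darboux theorem is essentially the paper's proof, with the added convenience that constant coefficients make the Jacobi identity for $\Pi$ immediate; in the paper's version one still needs the observation (made in the Remark following the proof) that $\omega_{\alpha\beta}$ is independent of the leaf coordinates, so that $\Pi$ is the pullback of the inverse of a symplectic form on the local leaf space and hence Poisson. Your closing paragraph correctly identifies the crux -- smoothness and integrability of the pointwise reflexive inverse -- which is exactly what the constant-rank hypothesis buys in either formulation.
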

\begin{proof}
Assume that $\Pi$ is a partial inverse to $\omega$ on some neighborhood $U$ of $m$ which we can assume, without loss of generality, to be connected. Note that the rank of every smooth map of vector bundles is lower semi-continuous. In particular, the function $\dim \im \omega$ is lower-semi-continuous and, hence, $\dim \ker \omega$ is upper semi-continuous. It follows from Eqs.~\eqref{eq:PartialInverses} that $\id_{TM} -  \Pi\omega$ is the projection to the kernel of $\omega$, which is a smooth map of vector bundles. It follows that $\im \ker \omega$ is also lower semi-continuous. We see that the dimension of the kernel of $\omega$ must be both upper and lower semi-continuous, hence, locally constant. Since $U$ is connected the dimension must be constant.

Assume now that $\omega$ is regular at $m \in M$. Since $\omega$ is closed $\ker \omega$ is an integrable distribution on some coordinate neighborhood $U \ni m$. Let $\{x^1, \ldots, x^p, y^1, \ldots, y^q \}$ be coordinates such that $\{ \frac{\partial}{\partial x^1} , \ldots,  \frac{\partial}{\partial x^p} \}$ spans $\ker\omega$. Then $\omega = \tfrac{1}{2} \omega_{ij} dy^i \wedge dy^j$ where the matrix $\omega_{ij}$ is non-degenerate. The bivector field $\Pi := \frac{1}{2} (\omega^{-1})^{ij} \frac{\partial}{\partial y^i} \wedge \frac{\partial}{\partial y^j}$ is a Poisson bivector that is a partial inverse of $\omega$.
\end{proof}

\begin{Remark}
The matrix function $\omega_{ij}$ in the proof does not depend on the $x$-coordinates since $0 = d( \frac{\partial}{\partial x^k} \InnDer \omega) = \Lie_{\frac{\partial}{\partial x^k}} \omega = \frac{1}{2}\frac{\partial \omega_{ij}}{\partial x^k} dy^i \wedge dy^k$.
\end{Remark}

The graph of $\omega$ can be expressed in terms of a partial inverse Poisson bivector $\Pi$ as follows: Since $TM = \ker \omega \oplus \im \Pi$, we have
\begin{equation*}
 \Graph \omega 
 = \Graph\omega|_{\ker \omega} \oplus \Graph \omega|_{\im \Pi} \,.
\end{equation*}
The map $\Pi\omega$ is the projection onto the image of $\Pi$, so that
\begin{equation*}
\begin{split}
 \Graph \omega |_{\im \Pi}
 &= \bigl\{ \Pi\omega(X) + \omega\bigl(\Pi\omega(X) \bigr) ~|~  X\in TM \bigr\} \\
 &= \bigl\{ \Pi\bigl(\omega(X)\bigr) + \omega(X) ~|~  X\in TM \bigr\} \\ 
 &= \Graph \Pi|_{\im\omega} \\
 &= \Graph \Pi|_{(\ker\omega)^\circ} 
 \,.
\end{split}
\end{equation*}
We conclude that
\begin{equation}
\label{eq:omegaPiGraph}
 \Graph \omega = \ker \omega \oplus \Graph \Pi|_{(\ker\omega)^\circ} \,,
\end{equation}
where we view $\ker \omega$ as subspace of $TM \oplus T^*M$. This suggests the following:

\begin{Proposition}
\label{prop:PartialRemovable}
Let the closed 2-form $\omega \in \Omega^2(M \setminus M_\mathrm{sing})$ have a singularity at $m \in M_\mathrm{sing}$. If there is a neighborhood $U$ of $m$ such that
\begin{itemize}
 \item[(i)] $\ker \omega$ extends to a smooth regular distribution on $U$,
 \item[(ii)] there is a Poisson bivector $\Pi$ on $U$ which is a partial inverse of $\omega$ on $U \setminus M_\mathrm{sing}$,
\end{itemize}
then the singularity is removable.
\end{Proposition}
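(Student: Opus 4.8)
The plan is to exploit the explicit description of $\Graph\omega$ in Eq.~\eqref{eq:omegaPiGraph} together with the spinor criterion of Prop.~\ref{prop:SpinorRemove}, or else to produce a smooth frame directly. Recall that on $U \setminus M_\mathrm{sing}$ we have
\begin{equation*}
 \Graph \omega = \ker \omega \oplus \Graph \Pi|_{(\ker\omega)^\circ} \,,
\end{equation*}
and that by hypothesis (i) the distribution $\ker\omega$ extends to a smooth regular distribution $D$ on all of $U$, while by hypothesis (ii) the Poisson bivector $\Pi$ is defined and smooth on all of $U$. The key observation is that each of the two summands on the right extends smoothly over $M_\mathrm{sing}$: the first because $D$ is smooth and regular by assumption, the second because it is built from the smooth objects $\Pi$ and the annihilator $D^\circ$, which is itself a smooth regular subbundle of $T^*M$ since $D$ is. Thus the strategy is to show that the closure of $\Graph\omega$ equals the smooth subbundle $D \oplus \Graph\Pi|_{D^\circ}$, and then invoke Remark~\ref{rmk:smoothimplied} to conclude it is a Dirac structure.

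First I would set up adapted coordinates, exactly as in the regularity proof above: choose $\{x^1,\ldots,x^p, y^1,\ldots,y^q\}$ on $U$ so that $\{\frac{\partial}{\partial x^i}\}$ spans the extended distribution $D = \overline{\ker\omega}$. (This is possible precisely because $D$ is smooth and regular by hypothesis (i), so it is integrable and we may straighten it.) In these coordinates the annihilator $D^\circ$ is spanned by $\{dy^\alpha\}$, so a natural candidate frame for the closure is
\begin{equation*}
 a_i = \frac{\partial}{\partial x^i} \,,\qquad
 b_\alpha = \Pi^{\alpha\beta}\frac{\partial}{\partial y^\beta} + dy^\alpha \,,
\end{equation*}
where $\Pi = \tfrac12 \Pi^{\alpha\beta}\frac{\partial}{\partial y^\alpha}\wedge\frac{\partial}{\partial y^\beta}$ is the smooth extension of the partial inverse. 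The $a_i$ are manifestly smooth sections of $\ker\omega \subset \Graph\omega$, and the $b_\alpha$ are smooth on all of $U$ since $\Pi$ is. It remains to check two things: that these $2p + q = \dim M$ sections are linearly independent at every point of $U$ (including over $M_\mathrm{sing}$), which is clear because their $TM \oplus T^*M$-components $\frac{\partial}{\partial x^i}$ and $dy^\alpha$ are already independent; and that on $U \setminus M_\mathrm{sing}$ they genuinely span $\Graph\omega$, which follows from Eq.~\eqref{eq:omegaPiGraph} once one verifies that $b_\alpha$ lies in $\Graph\Pi|_{D^\circ}$ there.

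The main obstacle, and the step deserving the most care, is the last compatibility check: one must confirm that on the regular locus the smooth frame $\{a_i, b_\alpha\}$ spans precisely $\Graph\omega$ and not some other maximal isotropic subbundle that merely agrees with it on a smaller set. Here the partial-inverse relations \eqref{eq:PartialInverses} do the essential work, since they guarantee $\im\Pi = (\ker\omega)^\circ{}^\circ$ matches the splitting $TM = \ker\omega \oplus \im\Pi$ and that $\Pi$ restricted to $D^\circ$ is the honest inverse of the nondegenerate part of $\omega$; one then reads off from \eqref{eq:omegaPiGraph} that $b_\alpha \in \Graph\Pi|_{D^\circ} = \Graph\omega|_{\im\Pi}$. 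Once this identification holds on the dense set $U \setminus M_\mathrm{sing}$, the closure of $\Graph\omega$ coincides with the smooth subbundle spanned by $\{a_i, b_\alpha\}$, and Remark~\ref{rmk:smoothimplied} immediately upgrades this smooth maximal isotropic subbundle to a Dirac structure, so the singularity is removable. A slicker alternative, which I would mention as a remark, is to skip coordinates entirely and apply Prop.~\ref{prop:SpinorRemove}: the pure spinor of $D \oplus \Graph\Pi|_{D^\circ}$ is, up to normalization, of the form $f e^{-\omega}$ for a regularizing function $f$ encoding the $D$-volume, which extends smoothly by construction.
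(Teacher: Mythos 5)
Your proposal follows essentially the same route as the paper: extend $\ker\omega$ and its annihilator to smooth regular subbundles, build the candidate frame from sections of the extended kernel together with sections $\Pi(\beta)+\beta$ for $\beta$ in the annihilator, check linear independence from the $TM$- and $T^*M$-components, use the partial-inverse identities \eqref{eq:PartialInverses} to verify that the frame spans $\Graph(\omega)$ on the dense regular set, and conclude via Remark~\ref{rmk:smoothimplied}. Two small inaccuracies are worth flagging, though neither is fatal: the parenthetical claim that a smooth regular distribution is automatically integrable is false in general (here integrability of $D=\overline{\ker\omega}$ does hold, because $\ker\omega$ is involutive on the dense regular set since $\omega$ is closed, and involutivity passes to the closure by continuity), and writing $\Pi=\tfrac12\Pi^{\alpha\beta}\partial_{y^\alpha}\wedge\partial_{y^\beta}$ presumes $\im\Pi$ is spanned by the $\partial/\partial y^\alpha$, which the adapted coordinates do not guarantee; both issues disappear if you simply work, as the paper does, with arbitrary local frames $\{v^i\}$ of $D$ and $\{\beta^\alpha\}$ of $D^\circ$ and the coordinate-free sections $\Pi(\beta^\alpha)+\beta^\alpha$ (also note the count should be $p+q=\dim M$, not $2p+q$).
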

\begin{proof}
Let $\overline{\ker \omega}$ be the smooth regular extension (i) of the distribution $\ker\omega$ and $\Pi$ the bivector (ii).  We will show that, in analogy to Eq.~\eqref{eq:omegaPiGraph}, the closure of $\Graph\omega$ is the direct sum of $\overline{\ker \omega}$ and of the graph of $\Pi$ restricted to the annihilator of $\overline{\ker \omega}$. 

Assumption (i) means that there is a smooth local frame $\{X_1, \ldots, X_p\} \subset \Gamma(U, \overline{\ker\omega})$. Since $\im\omega$ is the annihilator of $\ker\omega$, there is also a smooth frame $\{\beta_1, \ldots \beta_q\} \subset \Gamma(U,\Ann\bigl(\overline{\ker\omega}) \bigr) = \Gamma(U, \overline{\im\omega})$. And since $\{\beta_1, \ldots, \beta_q\}$ spans the image of $\omega$, there is a set $\{Z_1, \ldots, Z_q\}$ of smooth vector fields on $U$, such that $\beta_i = \omega(Z_i)$ on $U \setminus M_\mathrm{sing}$. We define
\begin{equation*}
 \eta_i := \Pi(\beta_i) + \beta_i
\end{equation*}
for all $i \in \{1, \ldots, q\}$, which is a linearly independent family of smooth sections of $TU \oplus T^*U$. By Eq.~\eqref{eq:PartialInverses} we have $\omega(Z_i) = \omega\bigl( \Pi(\omega(Z_i)) \bigr) = \omega(Y_i)$ where $Y_i := \Pi(\omega(Z_i))$. It follows that
\begin{equation*}
 \eta_i = \Pi(\beta_i) + \beta_i = \Pi\bigl(\omega(Y_i) \bigr) + \omega(Y_i)
 = Y_i + \omega(Y_i) \,,
\end{equation*}
which shows that all $\eta_i$ lie in the closure of $\Graph\omega$. The vector fields $X_i$ lie in the closure of $\Graph\omega$, as well, because $X_i$ is in the kernel of $\omega$. Since the rank of the graph is $\dim M = p+q$, the linearly independent set of smooth sections $\{X_1, \ldots X_p, \eta_1, \dots, \eta_q\}$ spans the closure of $\Graph\omega$ over $U$ which is, therefore, a smooth vector subbundle of $TU \oplus T^* U$.
\end{proof}

The following examples show that neither condition (i) nor condition (ii) of the proposition can be dropped.

\begin{Example}
This is a variant of Example~\ref{ex:rhoCylinder}. Let $M := \bbR^{3}$, $M_\mathrm{sing} := \{(x,y,z) ~|~ x = y = 0 \}$ and $\rho := \sqrt{x^2 + y^2}$. Consider the closed 2-form
\begin{equation*}
  \omega
  = d(\log\rho) \wedge dz
  = \frac{x\,dx + y\,dy}{\rho^2} \wedge dz \,,
\end{equation*}
which has singularities at $\rho = 0$. The Poisson bivector
\begin{equation*}
 \Pi =  - \Bigl( x \frac{\partial}{\partial x} + y \frac{\partial}{\partial y} \Bigr)
 \wedge \frac{\partial}{\partial z} \,,
\end{equation*}
is a partial inverse of $\omega$ such that $\Pi\omega$ is the projection to the $(\rho,z)$-planes in cylindrical coordinates. Since $\Pi$ extends smoothly to $\rho = 0$, condition (ii) is satisfied. The kernel of $\omega$ is spanned by the vector field $y \frac{\partial}{\partial x} - x \frac{\partial}{\partial y}$. The integrating foliation of concentric circles around the $z$-axis does not extend to a regular foliation at $\rho = 0$, so condition (i) is not met.

The candidate for the absolute value of the regularizing function is given by
\begin{equation*}
 f = \|e^{-\omega}\|^{-1} = \frac{\rho}{\sqrt{1 + \rho^2}} \,.
\end{equation*}
This function does not extend smoothly to $\rho = 0$. Moreover, since the open set $M \setminus f^{-1}(0)$ is connected, the only functions $g$ such that $f = |g|$ are $g = \pm f$. We conclude that the singularities at $\rho = 0$ are not removable.
\end{Example}

\begin{Example}
Let $M = \bbR^2$, $M_\mathrm{sing} = \{ 0 \}$, and $\omega = \sqrt{x^2 + y^2} \, dx \wedge dy$ which does not extend smoothly to 0. The kernel of $\omega$ extends to the zero distribution on all of $\bbR^2$, so condition (i) is satisfied. However, the inverse $\Pi = (x^2 + y^2)^{-\frac{1}{2}} \, \frac{\partial}{\partial x} \wedge \frac{\partial}{\partial y}$ does not extend smoothly to $0$, so (ii) is not satisfied. Since the singularity is bounded, it is not removable by Cor.~\ref{cor:FiniteNotRemovable} (iii).
\end{Example}

The last example can be easily generalized: Every presymplectic form $\omega$ on $M \setminus M_\mathrm{sing}$ that has an extension which is continuous but not smooth at $m \in M_\mathrm{sing}$ has a singularity at $m$. Since $\omega$ is bounded, this singularity is not removable.

\begin{Example}[Dirac monopole]
\label{ex:DiracMonopole}
Let $M = \bbR^3$, $M_\mathrm{sing} = \{ 0 \}$, and $\omega$ be given in spherical coordinates $\{r, \theta, \phi\}$ by $\omega = d\theta \wedge \sin \theta d\phi$, i.e.\ by radially constant area forms of spheres centered at the origin. The kernel of $\omega$ is spanned by the radial vector field $\frac{\partial}{\partial r}$. This is a distribution that does not extend to a smooth regular distribution at the origin, so condition (i) is not satisfied.

In cartesian coordinates $\{x^1, x^2, x^3\}$ $\omega$ takes the form
\begin{equation*}
 \omega = \varepsilon_{ijk} \frac{x^i}{r^3} \, dx^j \wedge dx^k \,,
\end{equation*}
where $\varepsilon_{ijk}$ denotes the totally antisymmetric tensor with $\varepsilon_{123} = 1$.
The presymplectic form $\omega$ has a partial inverse Poisson bivector given by $\Pi = \frac{1}{\sin\theta} \frac{\partial}{\partial \phi} \wedge \frac{\partial}{\partial \theta}$. In cartesian coordinates we have
\begin{equation*}
 \Pi = - \varepsilon_{ijk} r x^i \,
 \frac{\partial}{\partial x^j}\wedge \frac{\partial}{\partial x^k} \,,
\end{equation*}
from which we see that $\Pi$ has a $C^1$-differentiable extension to $r=0$, so the $C^1$-differentiable version of condition (ii) is satisfied.

For the candidate of a regularizing function we get
\begin{equation*}
 f := \| e^{-\omega} \|^{-1} = \frac{r^2}{\sqrt{1+r^4}} \,.
\end{equation*}
The function extends smoothly to $r=0$ and $f(0) = 0$ so the obstructions of Cor.~\ref{cor:FiniteNotRemovable} do not apply. Since $M \setminus f^{-1}(0)$ is connected, $f$ must be a regularizing function if the singularity is to be removable. However, the spinor field
\begin{equation*}
  \phi 
  = f - f\omega 
  = f +
  \varepsilon_{ijk} \frac{x^i}{r\sqrt{1+r^4}} \, dx^j \wedge dx^k 
\end{equation*}
is not continuous at $r=0$. Hence, by Prop~\ref{prop:SpinorRemove}, the singularity at $r=0$ is not removable.
\end{Example}


\subsection{The splitting theorem for presymplectic forms}

\begin{Theorem}
\label{thm:OmegaSplitting}
The closed 2-form $\omega \in \Omega^2(M \setminus M_\mathrm{sing})$ has a removable singularity at $m \in M_\mathrm{sing}$ if and only if $\omega$ can be locally split as
\begin{equation*}
 \omega = \omega_{\mathrm{reg}} + \omega_{\mathrm{sing}} \,,
\end{equation*}
into a closed 2-form $\omega_\mathrm{reg}$ that extends smoothly to a neighborhood of $m$ and a closed 2-form $\omega_\mathrm{sing}$ that satisfies conditions (i) and (ii) of Prop.~\ref{prop:PartialRemovable}, such that the partial inverse Poisson tensor of $\omega_{\mathrm{sing}}$ vanishes on the presymplectic leaf through $m$ of the extended Dirac structure.
\end{Theorem}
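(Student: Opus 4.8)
The statement is an equivalence, and I would prove the two implications separately; the forward (``if'') direction is a direct synthesis of earlier results, while the converse is where the splitting theorem does the work. For the easy direction, suppose $\omega = \omega_\mathrm{reg} + \omega_\mathrm{sing}$ is a splitting of the stated form. Since $\omega_\mathrm{sing}$ satisfies conditions (i) and (ii) of Prop.~\ref{prop:PartialRemovable}, that proposition shows directly that the singularity of $\omega_\mathrm{sing}$ at $m$ is removable. Because $\omega_\mathrm{reg}$ extends smoothly to $m$, Cor.~\ref{cor:SplitRemove} applied to the decomposition shows that the singularity of $\omega$ is removable if and only if that of $\omega_\mathrm{sing}$ is; hence $\omega$ has a removable singularity. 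The extra requirement that the partial inverse vanish on the characteristic leaf through $m$ is not needed in this direction---it is only an additional property that the converse will produce.

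For the converse, assume $\omega$ has a removable singularity at $m$, so that its graph closes up to a Dirac structure $L := \overline{\Graph(\omega)}$ on a neighborhood $U$ of $m$ which agrees with $\Graph(\omega)$ on the dense set $U \setminus M_\mathrm{sing}$. I would apply Cor.~\ref{cor:Splitting2} to $L$ at $m$, obtaining an embedding $\phi\colon S \times N \to U$ with $S$ the characteristic leaf of $\rho(L)$ through $m$, a closed $2$-form $\sigma \in \Omega^2(U)$ with vanishing pull-back to the fibres of $S \times N \to S$, and a Poisson bivector $\Pi \in \calX^2(N)$ vanishing at $m$, such that $\phi^* L = e^{\sigma}\bigl(TS \times \Graph(\Pi)\bigr)$. (I write the $B$-field of that corollary as $\sigma$ to avoid a clash with the presymplectic form $\omega$.) On the dense set $U \setminus M_\mathrm{sing}$ the anchor of $L$ is surjective, and since $B$-field transforms preserve the image of the anchor, $\Pi$ must be non-degenerate there. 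Writing $\omega_N$ for the $2$-form on $S \times N$ obtained by extending the inverse $\Pi^{-1}$ by zero in the $S$-directions, a short computation gives $TS \times \Graph(\Pi) = \Graph(\omega_N)$ and hence $e^{\sigma}\bigl(TS \times \Graph(\Pi)\bigr) = \Graph(\sigma + \omega_N)$ wherever $\Pi$ is invertible. Comparing with $\phi^* L = \Graph(\phi^*\omega)$ yields $\phi^*\omega = \sigma + \omega_N$ on the regular part.

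I would then set $\omega_\mathrm{reg} := \phi_* \sigma$ and $\omega_\mathrm{sing} := \omega - \omega_\mathrm{reg} = \phi_* \omega_N$. Both are closed, and $\omega_\mathrm{reg}$ extends smoothly to $m$ because $\sigma$ is smooth on all of $U$. The kernel $\ker \omega_\mathrm{sing} = \phi_*(TS)$ is the regular distribution tangent to the $S$-leaves and extends across $m$, which gives condition (i); the pushforward $\phi_* \Pi$ is a smooth Poisson bivector that on the regular part is the genuine inverse of $\omega_\mathrm{sing}$, so the partial-inverse identities \eqref{eq:PartialInverses} hold there and, by density, the same $\phi_* \Pi$ serves as a partial inverse, giving condition (ii). Finally, $\Pi$ depends only on the $N$-coordinates and vanishes at $m$, hence it vanishes along the whole leaf $S \times \{0\}$; its pushforward therefore vanishes on $\phi(S)$, the characteristic leaf through $m$, as required. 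In the guiding example this recovers $\omega_\mathrm{reg} = \omega_0$ and $\omega_\mathrm{sing} = B$.

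I expect the main obstacle to be the passage in the previous paragraph from the product Dirac decomposition supplied by Cor.~\ref{cor:Splitting2} to the additive splitting of the honest singular $2$-form $\omega$. The care there is that $\omega_\mathrm{sing}$ is singular at $m$, so the inversion $\omega_N = \Pi^{-1}$ and the partial-inverse relations only make literal sense on the dense regular part $U \setminus M_\mathrm{sing}$; the argument must invoke that the smooth data $\sigma$ and $\Pi$ furnished by the corollary extend these relations across $m$ by continuity, which is exactly what removability guarantees. Everything else is bookkeeping resting on the already-established Prop.~\ref{prop:PartialRemovable}, Cor.~\ref{cor:SplitRemove}, and Cor.~\ref{cor:Splitting2}.
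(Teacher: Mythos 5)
Your proposal is correct and follows essentially the same route as the paper: the forward direction is the same combination of Cor.~\ref{cor:SplitRemove} with Prop.~\ref{prop:PartialRemovable}, and the converse applies Cor.~\ref{cor:Splitting2}, takes $\omega_{\mathrm{reg}}$ to be the $B$-field $2$-form of that corollary and $\omega_{\mathrm{sing}}$ to be the inverse of $\Pi$ on the regular part (extended by zero in the leaf directions). You merely spell out a few steps the paper leaves implicit, such as the identification $\phi^*\omega = \sigma + \omega_N$ and the vanishing of the partial inverse along the leaf.
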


\begin{proof}
Assume that there is such a splitting $\omega = \omega_{\mathrm{reg}} + \omega_{\mathrm{sing}}$. It follows from Cor.~\ref{cor:SplitRemove} and Prop.~\ref{prop:PartialRemovable} that the singularity at $m$ is removable. 

Conversely, let the singularity at $m$ be removable and $L$ the Dirac structure that extends the graph of $\omega$ to a neighborhood of $m$. The 2-form $\omega_\mathrm{reg}$ is the closed 2-form that exists by Cor.~\ref{cor:Splitting2} such that $\phi^* L = e^{\omega_\mathrm{reg}}\bigl( TS \times \Graph(\Pi) \bigr)$. Since $L$ is the graph of a presymplectic form $\omega$, its anchor and, hence, the anchor of $e^{-\omega_\mathrm{reg}} \phi^*L$ is surjective on $M \setminus M_\mathrm{sing}$. It follows that outside of the singular set $M_\mathrm{sing}$ the Poisson bivector  $\Pi$ has a partial inverse given in local coordinates \eqref{eq:Defomega} by
\begin{equation*}
 \omega_\mathrm{sing} 
 :=  \tfrac{1}{2} \Pi^{-1}_{\alpha\beta}(y)\, d y^\alpha \wedge d y^\beta \,.
\end{equation*}
Since $\omega_\mathrm{sing}$ is closed because it is the difference of two closed 2-forms. Moreover, the kernel of $\omega_{\mathrm{sing}}$ extends to the foliation given by the fibers of the projection $S \times N \to N$. Cor.~\ref{cor:Splitting2} also states that $\Pi$ vanishes for $y=0$, as noted at the end of Sec.~\ref{sec:SplittingTheorem}.
\end{proof}

\begin{Remark}
The splitting of Theorem~\ref{thm:OmegaSplitting} is optimal in the sense that $\omega_\mathrm{sing}$ has the smallest possible rank.
\end{Remark}

\subsection{Log-symplectic and log-Dirac structures}
\label{sec:logsymplectic}

A \emph{log-sym\-plec\-tic} (also called $b$-symplectic or $b$-log-symplectic) structure on a manifold $M^{2n}$ is a Poisson bivector field $\Pi$ such that $\Pi^n$ has only non-degenerate zeros, i.e.\ viewed as section of the canonical bundle $\wedge^{2n} TM$ it intersects the zero section transversely \cite{Radko:2002,GuilleminMirandaPires:2014}. This implies that the zero locus $M_\mathrm{sing}$ is a submanifold of codimension 1, that $\Pi$ has an inverse symplectic form $\omega_\mathrm{log}$ on $M \setminus M_\mathrm{sing}$, and that $\omega$ has a Darboux type standard form in a neighborhood of a singular point when using a logarithmic coordinate in the normal direction (whence the name). Since $\omega_\mathrm{log}$ is the inverse of a Poisson vector that continues smoothly to $M_\mathrm{sing}$, its singularities are removable. The corresponding Dirac structure can be identified with the log-tangent bundle given by $\Pi^n$. In terms of the spinor of a Dirac structure log-symplectic structures can be characterized as follows:

\begin{Proposition}
\label{prop:LogSymplecticSpinor}
A Dirac structure $L$ on $M^{2n}$ is the graph of a log-symplectic structure if every point of $M$ has a neighborhood $U$, such that the pure spinor $\phi \in \Omega(U)$ defining $L|_U$ has the following two properties: 
\begin{itemize}
 \item[(i)] The function $\phi_0$ has only non-degenerate zeros.
 \item[(ii)] The section of the canonical bundle $\phi_{2n}$ is nowhere vanishing.
\end{itemize}
\end{Proposition}
\begin{proof}
Condition (ii) requires $\phi_{2n}$ to be a local volume form. This is the case iff for every nowhere vanishing vector field $X \in \calX(U)$ the inner derivative $i_X \phi_{2n}$ is nowhere vanishing. It follows that (ii) holds iff the only vector field in the annihilator of $\phi$ is $X = 0$, i.e.~iff $L \cap TU = 0$. By Prop.~\ref{prop:AnchorSurjective} this is the case iff $L$ is the graph of a bivector field.

Assume that Condition (ii) is satisfied. Then by Prop.~\ref{prop:Pispinor} every local spinor is of the form $\phi = e^{-\Pi} \cdot \Vol_U$ for a volume form $\Vol_U$ and a Poisson bivector $\Pi$ on an open subset $U \subset M$. It follows that $\phi_0 = (-\Pi)^n \cdot \Vol_U$, which has only non-degenerate zeros if and only if $\Pi^n$ has only non-degenerate zeros. 
\end{proof}

Prop.~\ref{prop:LogSymplecticSpinor} suggests to generalize the notion of log-symplectic structures to Dirac structures by dropping the requirement that the Dirac structure is the graph of a Poisson bivector:

\begin{Definition}
\label{def:logDirac}
A Dirac structure is called a \emph{log-Dirac structure} if it satisfies condition (i) of Prop.~\ref{prop:LogSymplecticSpinor}.
\end{Definition}

Generalizations of complex log-symplectic structures in the framework of generalized complex geometry have been studied in \cite{CavalcantiGualtieri:2015}. Here are a few straight-forward observations about log-Dirac structures:

\begin{Proposition}\mbox{ }
\label{prop:logDirac1}

\begin{itemize}

 \item[(i)] Let $\Pi$ be a Poisson bivector on $M$. Then $\Graph(\Pi)$ is a log-Dirac structure if and only if $\Pi$ is a log-symplectic structure.

 \item[(ii)] The gauge transformation $e^\omega L$ of a Dirac structure $L$ is a log-Dirac structure if and only if $L$ is a log-Dirac structure.
 
 \item[(iii)] The product $\Graph \omega \times L'$ of the graph of a presymplectic structure on $M$ and a Dirac structure on $M'$ is a log-Dirac structure if and only if $L'$ is a log-Dirac structure. 

\end{itemize}
\end{Proposition}
\begin{proof}
(i) Follows immediately from Prop.~\ref{prop:LogSymplecticSpinor} since the Dirac structure $\Graph(\Pi)$ is given by the spinor $\phi = e^{-\Pi} \cdot \Vol_M$

(ii) If $L$ is given by the spinor $\phi$, then $e^\omega L$ is given by the spinor $\phi' = e^{-\omega}\phi$. Hence $\phi_0 = \phi'_0$.

(iii) Let $\phi = e^{-\omega} \in \Omega(M)$ be the spinor of $\Graph \omega$ and $\phi' \in \Omega(M')$ the spinor of $L'$. Then the degree zero part of the spinor $\psi \in \Omega(M \times M')$ of $\Graph \omega \times L'$ is given by $\psi_0(m,m') = \phi_0(m) \phi'_0(m') = \phi'_0(m')$. The function $\psi_0$ has non-degenerate zeros if $0$ is a regular value, which is the case if and only if $0$ is a regular value of $\phi'_0$.
\end{proof}

\begin{Proposition}
A Dirac structure $L$ over $M$ is a log-Dirac structure if and only if every point $m \in M$ has an embedded neighborhood $\psi: S \times N \hookrightarrow M$, $(s, n) \mapsto m$ such that
\begin{equation*}
 \psi^* L = e^\omega(TS \times L') \,,
\end{equation*}
where $L'$ is a log-symplectic structure on $N$ and $\omega$ is a closed 2-form. 
\end{Proposition}
\begin{proof}
First, note that the Dirac structure $TS$ is given by the spinor $\phi = 1$, which shows that $TS$ is trivially a log-Dirac structure. Prop.~\ref{prop:logDirac1} (ii) and (iii) then imply that $e^\omega(TS \times L')$ is a log-Dirac structure if and only if $L'$ is a log-Dirac structure. Therefore, if $L' = \Graph(\Pi)$ for $\Pi$ log-symplectic, then $L$ is a log-Dirac structure.

Conversely, assume that $L$ is a log-Dirac structure. By Thm.~\ref{thm:SplittingTheorem} we have over some neighborhood of every point $m \in M$ a splitting with $L' = \Graph(\Pi)$ for a Poisson bivector $\Pi$ on $N$. Prop.~\ref{prop:logDirac1} (i) then states that $\Pi$ must be log-symplectic.
\end{proof}

\begin{Example}
Let $\omega_\mathrm{log}$ be a log-symplectic structure on $M$ with Poisson bivector $\Pi_\mathrm{log}$. The cotangent bundle $\pi: T^* M \to M$ is equipped with the singular symplectic structure $\omega = \omega_0 + \pi^* \omega_\mathrm{log}$, which is defined on $T^*(M \setminus M_\mathrm{sing})$ and has singularities on $T^*M|_{M_\mathrm{sing}}$. While $\omega$ fails to be log-symplectic, Cor.~\ref{cor:SplitRemove} tells us that the singularities of $\omega$ are removable. After trivializing the cotangent bundle on some open neighborhood $N \subset M$ as $T^*N \cong S \times N$, the pure spinor of the Dirac structure takes the form $\phi = e^{\omega_0} \wedge \bigl( e^{-\Pi_\mathrm{log}} \cdot \Vol_N \bigr)$. This shows that the cotangent bundle of a log-symplectic manifold is a log-Dirac manifold. By the same reasoning we see that the cotangent bundle of every log-Dirac manifold is again a log-Dirac manifold.
\end{Example}

\section{Removable singularities of Poisson structures}
\label{sec:Poisson}

\subsection{Removable singularities}

The definition of removable singularities of Poisson structures is completely analogous to the presymplectic case:

\begin{Definition}
A Poisson bivector $\Pi \in \calX^2(M \setminus M_\mathrm{sing})$ is said to have a singularity at $m \in M_\mathrm{sing}$ if $\Pi$ does not have a smooth extension to a neighborhood of $m$. A singularity at $m$ is called removable if the closure of $\Graph(\Pi)$ in $TM \oplus T^*M$ is a Dirac structure over a neighborhood of $m$.
\end{Definition}

More aspects of singularities of Poisson structures are analogous to the presymplectic case: Since the set $M\setminus M_\mathrm{sing}$ is dense, the Lie algebroid extending $\Graph(\Pi)$ is unique. By the same argument as in Remark~\ref{rmk:smoothimplied} it only needs to be checked if the closure of $\Graph(\Pi)$ in $TM \oplus T^*M$ is a smooth vector bundle. The smoothness of the rest of the Dirac structure is automatic.

Assume that all singularities of $\Pi$ are removable so that $L := \overline{\Graph(\Pi)}$ is a smooth Lie algebroid over $M$. Since $\pr_{T^*M}(L) = (L \cap TM)^\circ$, the singular points are those at which the null-distribution $L \cap TM$ has positive dimension. Since the set of points $m$ where $L \cap T_m M =0$ is $M \setminus M_\mathrm{sing}$ and thus dense, $L \cap TM$ is non-regular for all points in $M_\mathrm{sing}$. It follows from Remark~\ref{rmk:NullNonIntegrable} that there are no points in $M_\mathrm{sing}$ where the null-distribution $L \cap TM$ is locally integrable. This is in stark contrast to the presymplectic case.

\subsection{The pure spinor approach}

For the pure spinor approach to the singularities of a Poisson bivector we view multivector fields and differential forms as elements of the Clifford algebra $\mathrm{Cl}(TM \oplus T^*M)$ in the natural way. Using the notation $XY \equiv X\wedge Y$ for vector fields $X$, $Y$ and $\alpha\beta \equiv \alpha \wedge \beta$ for 1-forms $\alpha$, $\beta$, the commutation relations in the Clifford algebra are $XY = - YX$, $\alpha\beta = - \beta\alpha$, and $X \alpha + \alpha X = \langle \alpha, X \rangle$. We recall that there is a natural action of $\mathrm{Cl}(TM \oplus T^*M)$ on a differential form $\phi \in \Omega^\bullet(M)$ defined by $\alpha \cdot \phi = \alpha \wedge \phi$ and $X \cdot \phi = i_X \phi$. The following observation is well-known (e.g.\ \cite{EvensLu:1999} or p.~87 of \cite{Gualtieri:Annals2011}):

\begin{Proposition}
\label{prop:Pispinor}
Let $\Pi$ be a bivector on an orientable manifold $M$. Then $\Graph(\Pi)$ is the annihilator of the pure spinor field
\begin{equation*}
 \phi := e^{-\Pi} \cdot \Vol_M \,,
\end{equation*}
where $\Vol_M$ is a volume form on $M$.
\end{Proposition}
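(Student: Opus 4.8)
The plan is to verify directly that every element of $\Graph(\Pi)$ annihilates $\phi$, and then to close the argument by a dimension count. Write $\Pi^\sharp := i_{(\cdot)}\Pi : T^*M \to TM$, so that a general element of $\Graph(\Pi)$ is $\Pi^\sharp(\alpha) + \alpha$. Under the Clifford action $\Pi$ itself acts as the degree $-2$ contraction operator, which I denote $\iota_\Pi$, and $e^{-\Pi}\cdot\Vol = e^{-\iota_\Pi}\Vol$. Thus what I must show is that $(\Pi^\sharp(\alpha) + \alpha)\cdot\phi = i_{\Pi^\sharp(\alpha)}\phi + \alpha\wedge\phi = 0$ for every $1$-form $\alpha$.

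First I would establish two commutation relations from the Clifford relation $i_X\circ(\alpha\wedge) + (\alpha\wedge)\circ i_X = \alpha(X)$: namely $[\,\alpha\wedge,\,\iota_\Pi\,] = i_{\Pi^\sharp(\alpha)}$ and $[\,i_{\Pi^\sharp(\alpha)},\,\iota_\Pi\,] = 0$. The second holds because contraction by a vector commutes with contraction by a bivector (a single vector contraction anticommutes twice as it passes the two factors making up $\iota_\Pi$). The crucial consequence is that the operator produced in the first relation is \emph{central} with respect to $\iota_\Pi$; a short induction then gives $\alpha\wedge\,\iota_\Pi^{\,k} = \iota_\Pi^{\,k}\,(\alpha\wedge) + k\,\iota_\Pi^{\,k-1}\,i_{\Pi^\sharp(\alpha)}$, and summing the exponential series yields the clean identity $\alpha\wedge\,e^{-\iota_\Pi} = e^{-\iota_\Pi}\bigl(\alpha\wedge - i_{\Pi^\sharp(\alpha)}\bigr)$.

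Applying this identity to $\Vol$ and using that $\alpha\wedge\Vol = 0$ for degree reasons gives $\alpha\wedge\phi = -\,e^{-\iota_\Pi}\,i_{\Pi^\sharp(\alpha)}\Vol$, while the centrality of $i_{\Pi^\sharp(\alpha)}$ gives $i_{\Pi^\sharp(\alpha)}\phi = e^{-\iota_\Pi}\,i_{\Pi^\sharp(\alpha)}\Vol$. The two contributions cancel, so $(\Pi^\sharp(\alpha) + \alpha)\cdot\phi = 0$ and hence $\Graph(\Pi)\subseteq\Ann(\phi)$. To finish, note that $\phi$ is nowhere zero, since its top-degree component is exactly $\Vol$; therefore $\Ann(\phi)$ is a genuine isotropic subspace, of dimension at most $n = \dim M$. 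As $\Graph(\Pi)$ already has dimension $n$, the inclusion is forced to be an equality, so $\Ann(\phi) = \Graph(\Pi)$ is maximally isotropic and $\phi$ is a pure spinor. Smoothness of the field $\phi$ is automatic from smoothness of $\Pi$ and $\Vol$.

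The main obstacle is purely bookkeeping: pinning down the signs in $[\,\alpha\wedge,\,\iota_\Pi\,] = i_{\Pi^\sharp(\alpha)}$ (which depends on the sign convention for $i_\alpha\Pi$) and confirming the centrality $[\,i_{\Pi^\sharp(\alpha)},\,\iota_\Pi\,] = 0$. Once these are secured, the exponential identity and the final cancellation are formal, and the computation is exactly dual to the presymplectic one $(X+\alpha)\cdot e^{-\omega} = (\alpha - i_X\omega)\wedge e^{-\omega}$ used earlier to show $\Graph(\omega) = \Ann(e^{-\omega})$.
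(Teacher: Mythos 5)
Your proof is correct and follows essentially the same route as the paper: the identity $\alpha\wedge e^{-\iota_\Pi} = e^{-\iota_\Pi}\bigl(\alpha\wedge - i_{\Pi^\sharp(\alpha)}\bigr)$ is exactly the paper's Clifford-algebra relation $\alpha\, e^{-\Pi} = e^{-\Pi}\alpha - e^{-\Pi}\,\Pi(\alpha)$, applied to $\Vol$ with $\alpha\wedge\Vol = 0$. The only (harmless) difference is at the end: the paper reads off $\Ann(\phi) = \Graph(\Pi)$ directly from $(X+\alpha)\cdot\phi = e^{-\Pi}\bigl(X - \Pi(\alpha)\bigr)\cdot\Vol$ being zero iff $X = \Pi(\alpha)$, whereas you prove one inclusion and close with a dimension count using that $\phi$ is nowhere vanishing.
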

\begin{proof}
Follows from a short calculation in the Clifford algebra.
\end{proof}

If $M$ is not orientable, this spinor still exists locally which suffices to obtain a criterion for the removability of singularities of Poisson structures that is analogous to the presymplectic case.

\begin{Proposition}
\label{prop:PoissonSpinorRemove}
A singularity of the Poisson bivector $\Pi \in \Omega^2(M \setminus M_\mathrm{sing})$ at $m \in M_\mathrm{sing}$ is removable if and only if there is a smooth function $f \in C^\infty(U)$ on a neighborhood $U$ of $m$ such that $f e^{-\Pi}$ extends to a smooth, nowhere vanishing multivector field on all of $U$.
\end{Proposition}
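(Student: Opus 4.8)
The plan is to mirror the proof of Proposition~\ref{prop:SpinorRemove}, transporting the pure-spinor criterion from differential forms to multivector fields by means of a local volume form. Since removability is a local question, I would first shrink to an orientable neighborhood $U$ of $m$ and fix a smooth, nowhere vanishing volume form $\Vol$ on $U$. Contraction $\Psi \mapsto \Psi \cdot \Vol$ is then a smooth isomorphism of vector bundles $\wedge^\bullet TU \xrightarrow{\cong} \wedge^\bullet T^*U$, so a multivector field is smooth, extends smoothly across $M_\mathrm{sing}$, or is nowhere vanishing precisely when the form it is sent to has the same property. By the preceding Proposition, $\Graph(\Pi) = \Ann(e^{-\Pi}\cdot\Vol)$ on $U \setminus M_\mathrm{sing}$, so the multivector-field statement about $f e^{-\Pi}$ is equivalent to the form statement that $f\, e^{-\Pi}\cdot\Vol$ extends to a smooth nowhere vanishing pure spinor field on $U$; it is this reformulation that I would actually prove.

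For the forward direction, suppose the singularity is removable, so that $L := \overline{\Graph(\Pi)}$ is a smooth Dirac structure on $U$. As in the presymplectic case, $L$ is locally the annihilator of a smooth, nowhere vanishing pure spinor form $\sigma$. On the dense open set $U \setminus M_\mathrm{sing}$ the annihilators of $\sigma$ and of $e^{-\Pi}\cdot\Vol$ agree, so there is a nowhere vanishing smooth function $\tilde f$ on $U \setminus M_\mathrm{sing}$ with $\sigma = \tilde f\, e^{-\Pi}\cdot\Vol$. Pulling $\sigma$ back through $\Vol$ to the smooth, nowhere vanishing multivector field $\Psi$ with $\Psi \cdot \Vol = \sigma$, one has $\Psi = \tilde f\, e^{-\Pi}$ on $U \setminus M_\mathrm{sing}$. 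Since the degree-zero component of $e^{-\Pi} = 1 - \Pi + \tfrac12 \Pi\wedge\Pi - \cdots$ is the constant $1$, the degree-zero component of $\Psi$ is $\tilde f$; because $\Psi$ extends smoothly to $U$, so does $\tilde f$, say to $f \in C^\infty(U)$. Then $f e^{-\Pi} = \Psi$ extends smoothly and is nowhere vanishing, as required.

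For the converse, suppose $f e^{-\Pi}$ extends to a smooth, nowhere vanishing multivector field $\Psi$ on $U$. Then $\sigma := \Psi \cdot \Vol$ is a smooth, nowhere vanishing differential form whose annihilator restricts to $\Graph(\Pi)$ on $U \setminus M_\mathrm{sing}$. The Clifford multiplication map $v \mapsto v \cdot \sigma$ has an isotropic kernel of dimension $n = \dim M$ on the dense set $U \setminus M_\mathrm{sing}$; since the kernel dimension is upper semicontinuous and is bounded above by $n$ by isotropy, it equals $n$ everywhere, so $\Ann(\sigma)$ is a maximally isotropic subbundle of constant rank, hence a smooth subbundle extending $\Graph(\Pi)$. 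By the Poisson analog of Remark~\ref{rmk:smoothimplied} this subbundle is automatically a Dirac structure, so the singularity is removable.

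The main obstacle I anticipate is the point in the converse where one must check that purity (maximal isotropy) of $\sigma$ persists over $M_\mathrm{sing}$: a priori the annihilator of a nowhere vanishing spinor could drop to a merely isotropic subspace at the singular points. The upper semicontinuity of the kernel dimension of the Clifford multiplication map, combined with density of $U \setminus M_\mathrm{sing}$ and the isotropy bound, is exactly what rules this out and simultaneously yields constancy of the rank, which is what is needed for $\Ann(\sigma)$ to be a genuine smooth subbundle.
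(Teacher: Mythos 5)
Your proof is correct and follows essentially the same route as the paper: fix a local volume form, pass to the pure spinor $e^{-\Pi}\cdot\Vol$, and rerun the argument of Prop.~\ref{prop:SpinorRemove}. You supply more detail than the paper (which simply invokes ``the same argument''), in particular the upper-semicontinuity argument showing $\Ann(\sigma)$ stays maximally isotropic of constant rank over $M_\mathrm{sing}$, which is a correct and welcome elaboration of a step the paper leaves implicit.
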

\begin{proof}
Let $\Vol_U$ be a volume form on a neighborhood $U$ of $m$. By the last proposition $\Graph(\Pi)$ is the annihilator of $\phi = e^{-\Pi} \cdot \Vol_U$. By the same argument as in the proof of Prop.~\ref{prop:SpinorRemove} the singularity is removable if and only if there is a smooth function $f$ such that $f\phi = fe^{-\Pi} \cdot \Vol_U$ extends smoothly to a nowhere vanishing differential form on all of $U$. This is the case if and only if $fe^{-\Pi}$ extends smoothly to a nowhere vanishing multivector field on $U$.
\end{proof}

\begin{Remark}
As corollary of this proposition we obtain the analogue of the obstructions to the removability of a singularity given in Cor.~\ref{cor:FiniteNotRemovable}, replacing $\omega$ by $\Pi$ throughout.
\end{Remark}

\subsection{Remark on the splitting of singular Poisson bivectors}

There is no good analog of Theorem~\ref{thm:OmegaSplitting} for a Poisson structure $\Pi$ for the following reasons: First, unlike for presymplectic forms, the sum of two Poisson bivectors is generally not Poisson. Second, there is no Poisson bivector analog of the gauge transformation ($B$-field transform). That is, while $e^\Pi L := \{ X + \Pi(\alpha) + \alpha ~|~ X + \alpha \in L\}$ is a maximally isotropic subbundle, it is in general not closed under the Dorfman bracket. All we can say is that the Poisson bivector of Thm.~\ref{thm:SplittingTheorem} can be viewed as the maximal regular part $\Pi_\mathrm{reg}$ of a Poisson bivector in the neighborhood of a removable singularity. In fact, since the 2-form $\omega$ of Thm.~\ref{thm:SplittingTheorem} vanishes at the singularity, $\omega$ does not have a partial Poisson inverse on any submanifold containing the singularity. Thm.~\ref{thm:SplittingTheorem} also implies that the original Poisson bivector $\Pi$ can be reconstructed from $\Pi_\mathrm{reg}$ and $\omega$. However, the expression obtained after a calculation using the standard frame \eqref{eq:StandardFrame} is rather complicated and does not lead to further geometric insight.

\appendix

\section{The local form of Dufour and Wade}

In Theorem 3.2 of \cite{DufourWade:2008} it was shown that there is a basis of local sections of a Dirac structure $L$ in a neighborhood of a point $m$ of the following form:
\begin{equation}
\label{eq:DufourWadeBasis}
\begin{aligned}
 \tilde{a}_i &= 
  \Bigl( \frac{\partial}{\partial x^i}
  + A_{i\beta} \frac{\partial}{\partial y^\beta} \Bigr) + B_{ij} dx^j \\
 \tilde{b}_\alpha &= 
  \Pi_{\alpha\beta} \frac{\partial}{\partial y^\beta} +
  \bigl( dy^\alpha - A_{i\alpha} dx^i \bigr) \,.
\end{aligned}
\end{equation}
It follows from the defining properties of a Dirac structure that the functions $A_{i\beta}$ and $\Pi_{\alpha\beta}$ must vanish for $y=0$, that $B_{ij} = - B_{ij}$, and that $\Pi := \frac{1}{2}\Pi_{\alpha\beta} \frac{\partial}{\partial y^\beta} \wedge \frac{\partial}{\partial y^\beta}$ is a Poisson bivector.

The vector fields $\rho(\tilde{a}_i)$ of Eq.~\eqref{eq:DufourWadeBasis} can be viewed as affine connection
\begin{equation*}
 \nabla_{\frac{\partial}{\partial x^i}} 
 = \frac{\partial}{\partial x^i}
  + A_{i\beta} \frac{\partial}{\partial y^\beta} 
\end{equation*}
on the normal bundle $NS \cong \bbR^p \times \bbR^q \to \bbR^p \cong S$ of the characteristic leaf through $m = 0$. Furthermore, it was shown in  \cite{DufourWade:2008} that $[\rho(a_i), \Pi] = 0$, which means that the transverse Poisson bivector $\Pi$ is invariant under the connection. This implies that parallel transport induces isomorphisms of the Poisson manifolds of the fibers, so that a Dirac structure in a tubular neighborhood of a characteristic leaf $S$ can be described by the same geometric data as in \cite{Vorobjev:2000}.

However, this result, which is obtained by basic linear algebra, does not yield a local splitting theorem for which it is necessary to show that a \emph{flat} connection compatible with $\Pi$ can be found. The existence of such a flat connection is implied by Theorem~\ref{thm:SplittingTheorem}, as we can simply take $\nabla_{\frac{\partial}{\partial x^i}} = \frac{\partial}{\partial x^i}$ in the local coordinates of Eq.~\eqref{eq:StandardFrame}.

From Theorem~\ref{thm:SplittingTheorem} we obtain a basis in the form \eqref{eq:DufourWadeBasis} as linear combination of the standard frame~\eqref{eq:StandardFrame} (with $\omega^{yy} = 0$ as in Cor.~\ref{cor:Splitting2}) given as
\begin{equation*}
 \tilde{a}_i := a_i - \omega^{xy}_{i\alpha} b_{\alpha} \,,\quad
 \tilde{b}_\alpha := b_\alpha \,.
\end{equation*}
The matrices $A$ and $B$ of \eqref{eq:DufourWadeBasis} are given by
\begin{equation*}
 A_{i\beta} = - \omega^{xy}_{i\alpha} \Pi_{\alpha\beta} \,,\quad
 B_{ij} = \omega^{xx}_{ij} 
   + \omega^{xy}_{i\alpha} \Pi_{\alpha\beta} \omega^{xy}_{j \beta} \,.
\end{equation*}

It is not clear how to obtain from \eqref{eq:DufourWadeBasis} a basis of the form \eqref{eq:StandardFrame}. Let us assume for simplicity that the Lie algebroid removes the singularity of a presymplectic form, so that $\Pi_{\alpha\beta}$ is invertible on a dense subset of a neighborhood of $0$. It follows that on that subset $A_{i\alpha} = C_{i\alpha}\Pi_{\alpha\beta}$, where $C_{i\alpha} := A_{i\gamma} (\Pi^{-1})_{\gamma\alpha}$. Then
\begin{equation*}
 a_i := \tilde{a}_i - C_{i\beta} \tilde{b}_{\beta} \,,\quad
 b_\alpha := \tilde{b}_\alpha 
\end{equation*}
is a basis of local sections of the form of Eq.~\eqref{eq:StandardFrame}, where
\begin{equation*}
 \omega^{xy}_{i\alpha} := - C_{i\alpha} \,,\quad
 \omega^{xx}_{ij} := B_{ij} - C_{i\alpha} \Pi_{\alpha\beta} C_{j\beta} \,.
\end{equation*}
A priori, $C_{i\alpha}$ and, hence, $a_i$ is only defined on the regular part. But since $\{\frac{\partial}{\partial x^i } \}$ spans the tangent space of the leaf of $\rho(L)$ through $0$, the sections $a_i$ and, hence, $C_{i\alpha}$ extends smoothly to $M_\mathrm{sing}$. Let $\omega$ be defined in terms of $\omega^{xx}_{ij}$ and $\omega^{xy}_{i\alpha}$ as in Eq.~\eqref{eq:Defomega}. It can be checked by explicit calculation that $[a_i, b_\alpha ] = 0$ is equivalent to $d\omega = 0$.

\bibliographystyle{plain}
\bibliography{DiracStructures}

\begin{thebibliography}{10}

\bibitem{AbouzaidBoyarchenko:2006}
Mohammed Abouzaid and Mitya Boyarchenko.
\newblock Local structure of generalized complex manifolds.
\newblock {\em J. Symplectic Geom.}, 4(1):43--62, 2006.

\bibitem{AlekseevBursztynMeinrenken:2009}
A.~Alekseev, H.~Bursztyn, and E.~Meinrenken.
\newblock Pure spinors on {L}ie groups.
\newblock {\em Ast\'erisque}, (327):131--199, 2009.

\bibitem{AlekseevMeinrenken:2012}
A.~Alekseev and E.~Meinrenken.
\newblock Dirac structures and {D}ixmier-{D}ouady bundles.
\newblock {\em Int. Math. Res. Not. IMRN}, (4):904--956, 2012.

\bibitem{BrahicFernandes:2014}
Olivier Brahic and Rui~Loja Fernandes.
\newblock Integrability and reduction of {H}amiltonian actions on {D}irac
  manifolds.
\newblock {\em Indag. Math. (N.S.)}, 25(5):901--925, 2014.

\bibitem{BursztynCavalcantiGualtieri:2015}
Henrique Bursztyn, Gil~R. Cavalcanti, and Marco Gualtieri.
\newblock Generalized {K}\"ahler geometry of instanton moduli spaces.
\newblock {\em Comm. Math. Phys.}, 333(2):831--860, 2015.

\bibitem{BursztynLimaMeinrenken:2016}
Henrique Bursztyn, Hudson Lima, and Eckhard Meinrenken.
\newblock Splitting theorems for {P}oisson and related structures.
\newblock Preprint arXiv:1605.05386.

\bibitem{CavalcantiGualtieri:2015}
Gil Cavalcanti and Marco Gualtieri.
\newblock Stable generalized complex structures.
\newblock Preprint arXiv:1503.06357.

\bibitem{Chevalley:Spinors1954}
Claude~C. Chevalley.
\newblock {\em The algebraic theory of spinors}.
\newblock Columbia University Press, New York, 1954.

\bibitem{CourantWeinstein:1988}
Ted Courant and Alan Weinstein.
\newblock Beyond {P}oisson structures.
\newblock In {\em Action hamiltoniennes de groupes. {T}roisi\`eme th\'eor\`eme
  de {L}ie ({L}yon, 1986)}, volume~27 of {\em Travaux en Cours}, pages 39--49.
  Hermann, Paris, 1988.

\bibitem{Courant:Transactions1990}
Theodore~James Courant.
\newblock Dirac manifolds.
\newblock {\em Trans. Amer. Math. Soc.}, 319(2):631--661, 1990.

\bibitem{DufourWade:2008}
Jean-Paul Dufour and A{\"{\i}}ssa Wade.
\newblock On the local structure of {D}irac manifolds.
\newblock {\em Compos. Math.}, 144(3):774--786, 2008.

\bibitem{EvensLu:1999}
Sam Evens and Jiang-Hua Lu.
\newblock Poisson harmonic forms, {K}ostant harmonic forms, and the
  {$S^1$}-equivariant cohomology of {$K/T$}.
\newblock {\em Adv. Math.}, 142(2):171--220, 1999.

\bibitem{Gualtieri:Annals2011}
Marco Gualtieri.
\newblock Generalized complex geometry.
\newblock {\em Ann. of Math. (2)}, 174(1):75--123, 2011.

\bibitem{Gualtieri:2014}
Marco Gualtieri.
\newblock Generalized {K}\"ahler geometry.
\newblock {\em Comm. Math. Phys.}, 331(1):297--331, 2014.

\bibitem{GuilleminMirandaPires:2014}
Victor Guillemin, Eva Miranda, and Ana~Rita Pires.
\newblock Symplectic and {P}oisson geometry on {$b$}-manifolds.
\newblock {\em Adv. Math.}, 264:864--896, 2014.

\bibitem{Hitchin:2003}
Nigel Hitchin.
\newblock Generalized {C}alabi-{Y}au manifolds.
\newblock {\em Q. J. Math.}, 54(3):281--308, 2003.

\bibitem{Koerfer:Bachelor2014}
Max K\"orfer.
\newblock Dirac manifolds and symplectic singularities.
\newblock Bachelor's thesis, Universit\"at Bonn, 2014.

\bibitem{Li-BlandMeinrenken:2014}
David Li-Bland and Eckhard Meinrenken.
\newblock Dirac {L}ie groups.
\newblock {\em Asian J. Math.}, 18(5):779--815, 2014.

\bibitem{Radko:2002}
Olga Radko.
\newblock A classification of topologically stable {P}oisson structures on a
  compact oriented surface.
\newblock {\em J. Symplectic Geom.}, 1(3):523--542, 2002.

\bibitem{SeveraWeinstein:2001}
Pavol {\v{S}}evera and Alan Weinstein.
\newblock Poisson geometry with a 3-form background.
\newblock {\em Progr. Theoret. Phys. Suppl.}, (144):145--154, 2001.
\newblock Noncommutative geometry and string theory (Yokohama, 2001).

\bibitem{Vorobjev:2000}
Yurii Vorobjev.
\newblock Coupling tensors and {P}oisson geometry near a single symplectic
  leaf.
\newblock In {\em Lie algebroids and related topics in differential geometry
  ({W}arsaw, 2000)}, volume~54 of {\em Banach Center Publ.}, pages 249--274.
  Polish Acad. Sci. Inst. Math., Warsaw, 2001.

\bibitem{Weinstein:Splitting1983}
Alan Weinstein.
\newblock The local structure of {P}oisson manifolds.
\newblock {\em J. Differential Geom.}, 18(3):523--557, 1983.

\end{thebibliography}

\end{document}